\newtheorem{theorem}{Theorem}[section]
\newtheorem{proposition}[theorem]{Proposition}
\newtheorem{corollary}[theorem]{Corollary}
\theoremstyle{definition}
\newtheorem{definition}[theorem]{Definition}
\newtheorem{remark}[theorem]{Remark}
\newtheorem{convention}[theorem]{Convention}
\numberwithin{equation}{section}
\DeclareMathOperator{\id}{id}
\newcommand{\A}{\mathcal{A}}
\newcommand{\B}{\mathcal{B}}
\newcommand{\C}{\mathbb{C}}
\newcommand{\Cc}{\mathcal{C}}
\newcommand{\E}{\mathcal{E}}
\newcommand{\F}{\mathbb{F}}
\newcommand{\gl}{\mathfrak{gl}}
\newcommand{\I}{\mathcal{I}}
\newcommand{\Nc}{\mathcal{N}}
\newcommand{\Pc}{\mathcal{P}}
\newcommand{\U}{\mathcal{U}}
\newcommand{\Z}{\mathbb{Z}}
\newcommand{\Zc}{\mathcal{Z}}
\title{Compatibility in Ozsv{\'a}th--Szab{\'o}'s bordered HFK via higher representations}
\author{William Chang}
\address{Department of Mathematics, University of Southern California, 3620 S. Vermont Ave., KAP 104, 
Los Angeles, CA 90089-2532}
\email{chan087@usc.edu}
\author{Andrew Manion}
\address{Department of Mathematics, North Carolina State University, 2108 SAS Hall, Raleigh, NC 27695}
\email{ajmanion@ncsu.edu}
\begin{document}

\begin{abstract}
    We equip the basic local crossing bimodules in Ozsv{\'a}th--Szab{\'o}'s theory of bordered knot Floer homology with the structure of 1-morphisms of 2-representations, categorifying the $U_q(\gl(1|1)^+)$-intertwining property of the corresponding maps between ordinary representations. Besides yielding a new connection between bordered knot Floer homology and higher representation theory in line with work of Rouquier and the second author, this structure gives an algebraic reformulation of a ``compatibility between summands'' property for Ozsv{\'a}th--Szab{\'o}'s bimodules that is important when building their theory up from local crossings to more global tangles and knots.
\end{abstract}

\maketitle

\tableofcontents

\section{Introduction}
Ozsv{\'a}th--Szab{\'o}'s theory \cite{OSzNew,OSzNewer,OSzHolo,OSzHolo2} of bordered knot Floer homology, or bordered HFK, has proven to be highly efficient for computations (see \cite{HFKCalc} for a fast computer program based on the theory). It works by assigning certain dg algebras to sets of $n$ tangle endpoints (oriented up or down) and certain $A_{\infty}$ bimodules to tangles; one recovers HFK for closed knots by taking appropriate tensor products of these bimodules.

In \cite{ManionDecat}, the second author showed that the dg algebras of bordered HFK categorify representations of the quantum supergroup $U_q(\gl(1|1))$ and that the tangle bimodules categorify intertwining maps between these representations. While \cite{ManionDecat} did not consider a categorified action of the quantum group on the bordered HFK algebras, such an action (for Khovanov's categorification $\U$ \cite{KhovOneHalf} of the positive half $U_q(\gl(1|1)^+) = \frac{\C(q)[E]}{(E^2)}$) was defined in \cite{LaudaManion}, compatibly (via \cite{LP, MMW2}) with a more general family of higher actions defined in \cite{ManionRouquier}.

Since Ozsv{\'a}th--Szab{\'o}'s tangle bimodules categorify intertwining maps between representations, it is natural to ask whether the bimodules themselves intertwine the higher actions of $\U$ on the bordered HFK algebras. Since a higher action of $\U$ on a dg algebra $\A$ amounts to a dg bimodule $\E$ over $\A$ together with some extra data, one (roughly) asks whether tangle bimodules $X$ satisfy $X \otimes_{\A} \E \cong \E \otimes_{\A} X$. A structured way to require such commutativity is to equip $X$ with the data of a 1-morphism between 2-representations of $\U$.

The main result of this paper is that one can naturally equip Ozsv{\'a}th--Szab{\'o}'s local crossing bimodules with this 1-morphism structure.

\begin{theorem}\label{thm:IntroMain}
Ozsv{\'a}th--Szab{\'o}'s local crossing bimodules $\Pc$ and $\Nc$, for a positive and negative crossing between two strands, can be equipped with the structure of 1-morphisms of 2-representations over $\U$, encoding the commutativity of $\Pc$ and $\Nc$ with the 2-action bimodule $\E$.
\end{theorem}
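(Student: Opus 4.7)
The plan is to work separately with $\Pc$ and $\Nc$, in each case constructing the 1-morphism data explicitly on top of the underlying bimodule. First I would recall the precise definition of a 1-morphism of 2-representations over $\U$ from \cite{LaudaManion, ManionRouquier}: besides the bimodule $X$ itself, one needs a closed $(\A,\A)$-bimodule morphism
\[
\psi_X \colon X \otimes_{\A} \E \longrightarrow \E \otimes_{\A} X,
\]
together with higher coherence data compatible with the categorified relations of $U_q(\gl(1|1)^+)$, including compatibility with the chosen nullhomotopy on $\E \otimes_{\A} \E$ that implements the $E^2 = 0$ relation at the 2-level.

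Using the explicit generator-and-relation presentations of $\Pc$, $\Nc$, $\E$, and $\A$ from the original Ozsv\'ath--Szab\'o papers and from \cite{LaudaManion}, I would then match generators of $X \otimes_{\A} \E$ and $\E \otimes_{\A} X$ according to their idempotent types and underlying strand configurations. Since idempotents strongly constrain which generators can map to which, this pairing should yield a canonical ansatz for $\psi_X$ on basis elements, fixed up to a small number of scalar choices. I would then verify that $\psi_X$ is a chain map with respect to the tensor-product differentials, reconciling the internal differentials of $\Pc$ or $\Nc$ with the structure maps of $\E$, and finally check the higher coherence axioms that promote $X$ to a 1-morphism of 2-representations.

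The main obstacle I anticipate is the higher coherence step rather than the initial construction. The differentials on Ozsv\'ath--Szab\'o's crossing bimodules contain many terms arising from partial strand-moves and from algebra elements satisfying nontrivial defining relations, so tracking all contributions through the coherence diagrams and matching them against the categorified $U_q(\gl(1|1)^+)$-structure requires careful bookkeeping. However, I expect the combined rigidity of the idempotent grading and the $A_\infty$ axioms to force the choices: once the correct ansatz for $\psi_X$ on generators is identified, the remaining verification should reduce to a finite if intricate calculation, case-split according to the local idempotent types (e.g.\ which of the two strands carries an $E$-decoration on either side of $\psi_X$).
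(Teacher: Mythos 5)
Your overall strategy---build an explicit intertwiner $\alpha \colon X \boxtimes \E \to \E \boxtimes X$ by matching generators according to idempotent type and then verify compatibility---is in the right spirit and is essentially what the paper does, using the matrix formalism for $DA$ bimodules so that the box tensor products $\E_i \boxtimes \Pc$, $\Pc \boxtimes \E_i$, etc.\ can be computed combinatorially and seen to have literally identical primary and secondary matrices up to relabeling. Two comments on where your proposal diverges from the paper in ways that matter.

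First, and most importantly, you locate the main anticipated difficulty in the ``higher coherence step,'' specifically compatibility with the nullhomotopy $\tau$ on $\E \boxtimes \E$ implementing $E^2 = 0$. In the 2-actions the paper defines, this step is actually vacuous: the bimodules $\E_1$ and $\E_2$ on $\B(2)$ satisfy $\E_i \boxtimes \E_i = 0$ on the nose (this is visible at the level of primary matrices), so the only possible choice is $\tau = 0$, and the coherence equation
\[
(\tau' \boxtimes \id_X) \circ (\id_{\E'} \boxtimes \alpha) \circ (\alpha \boxtimes \id_{\E}) = (\id_{\E'} \boxtimes \alpha) \circ (\alpha \boxtimes \id_{\E}) \circ (\id_X \boxtimes \tau)
\]
has both sides identically zero. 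You would discover this upon carrying out the construction, but being aware of it up front collapses your expected bottleneck and shifts all the real work to the explicit verification that the two box tensor products agree---i.e.\ to the computation you characterize as the ``initial construction.'' That computation is where the paper's content lives: the full secondary matrices of $\Pc$ and $\Nc$ (including the infinite families of $A_\infty$ terms $*_1, \ldots, *_4$ and $*'_1, \ldots, *'_4$) must be tensored against the rather small $\E_i$ on both sides, and the resulting matrices checked to coincide.

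Second, there are two natural 2-actions of $\U$ on $\B(2)$, one for each interval of the underlying arc diagram, realized by the two bimodules $\E_1$ and $\E_2$; the paper proves the theorem for both, so you would want to run your generator-matching argument for each. With these two adjustments your outline matches the paper's proof.
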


In fact, the algebra over which $\Pc$ and $\Nc$ are defined has two natural 2-actions of $\U$, and we prove Theorem~\ref{thm:IntroMain} for both 2-actions. Below we comment a bit more on the motivation and potential applications for Theorem~\ref{thm:IntroMain}, as well as future directions for study.

\begin{remark}
Theorem~\ref{thm:IntroMain} is an algebraic expression of an important ``compatibility between summands'' property of the bordered HFK bimodules. Indeed, like the general strands algebras $\A(\Zc)$ of bordered Heegaard Floer homology, Ozsv{\'a}th--Szab{\'o}'s bordered HFK algebras have a direct sum decomposition indexed by $\Z$ (in Heegaard diagram terms this index describes occupancy number, while representation-theoretically it encodes a $\gl(1|1)$ weight space decomposition). The $A_{\infty}$ bimodules for tangles respect this decomposition, and there is a certain compatibility between the bimodule summands for different $k$. In \cite{OSzNew}, this compatibility is encoded in a graph from which one can define all summands of the bimodules. Because of how the 2-action bimodules $\E$ interact with the index of the direct sum decomposition, Theorem~\ref{thm:IntroMain} is a more algebraic way to formulate this compatibility.

In \cite{OSzNew}, this compatibility is the key ingredient in the ``global extension'' of the two-strand crossing bimodules to bimodules, over larger algebras, for $n$ strands with one crossing between two adjacent strands (this extension is necessary when using the theory of \cite{OSzNew} to compute HFK for knots). The global extension is one of the most technical parts of \cite{OSzNew}; the main hoped-for application of the results of this paper is a more algebraic treatment of the global extension, based on higher representation theory.
\end{remark}

\begin{remark}
The 1-morphism structure of Theorem~\ref{thm:IntroMain} can be interpreted as an instance of an extra layer of the connection between higher representation theory and cornered Heegaard Floer homology, beyond what was explored in \cite{ManionRouquier}. This extra layer involves 3-manifolds, not just 1- and 2-manifolds, and begins to relate to the parts of cornered Heegaard Floer homology that use holomorphic disk counts and domains in Heegaard diagrams with corners. Generalizing from Theorem~\ref{thm:IntroMain}, there should be a general family of Heegaard diagrams (with the diagrams underlying the bordered HFK bimodules as special cases) whose bimodules can be upgraded to 1-morphisms of 2-representations, and the data needed for this upgrade should come from counting holomorphic disks whose domains have positive multiplicities at the corners of the Heegaard diagram.
\end{remark}

\begin{remark}
This paper is focused on the local two-strand aspects of bordered HFK, since these are the elementary building blocks to which one wants to apply a global extension procedure to obtain $n$-strand tangle invariants. One could also ask whether the globally-extended $n$-strand tangle bimodules of bordered HFK give 1-morphisms of 2-representations of $\U$; we expect this to be true. Furthermore, the local bimodules considered here are adapted to two strands pointing in the same direction (downwards, in the conventions of \cite{OSzNew}). For strands with other orientations, one has a choice of more elaborate theories from \cite{OSzNew,OSzNewer,OSzHolo}, some involving curved dg algebras. We expect that the bimodules of these more elaborate theories also give 1-morphisms of 2-representations of $\U$, once (e.g.) 2-representations are appropriately defined on the curved dg algebras.
\end{remark}

\begin{remark}\label{rem:LargerAlgebras}
Since it follows from \cite{LP, MMW2} that the local Ozsv{\'a}th--Szab{\'o} algebras appearing in this paper are quasi-isomorphic to certain (larger) dg strands algebras $\A(\Zc)$, it is natural to ask whether there are bimodules corresponding to $\Pc$ and $\Nc$ over the larger algebras, and if so, whether these bimodules give 1-morphisms between the 2-representation structures on $\A(\Zc)$ defined directly in \cite{ManionRouquier}. The answer in both cases appears to be ``yes;'' the authors of \cite{MMW2} hope to address this question in work in preparation.
\end{remark}

\subsection*{Organization}

In Section~\ref{sec:BorderedAlgebra} we review algebraic definitions from bordered Heegaard Floer homology, including a matrix-based notation from \cite{ManionTrivalent} that will be useful here. In Section~\ref{sec:BorderedHFK} we review what we need from Ozsv{\'a}th--Szab{\'o}'s theory of bordered HFK. In Section~\ref{sec:HigherReps} we review the relevant input from higher representation theory and define 2-actions of $\U$ on the local bordered HFK algebras. In Section~\ref{sec:P1Morphism} we show that Theorem~\ref{thm:IntroMain} holds for Ozsv{\'a}th--Szab{\'o}'s local positive-crossing bimodule $\Pc$, and in Section~\ref{sec:N1Morphism} we do the same for the local negative-crossing bimodule $\Nc$.

\subsection*{Acknowledgments}

The second author would like to thank Zolt{\'a}n Szab{\'o} for many useful conversations over the years related to bordered HFK and the topics of this paper. A.M. is partially supported by NSF grant DMS-2151786.

\section{Bordered algebra}\label{sec:BorderedAlgebra}

\subsection{\texorpdfstring{$DA$ bimodules}{DA bimodules}}\label{sec:DABimod}

We will work with $DA$ bimodules, as defined by Lipshitz--Ozsv{\'a}th--Thurston \cite[Section 2.2.4]{LOTBimodules}, over associative algebras. We will assume that these associative algebras $\A$ are defined over a field $k$ of characteristic $2$ and come equipped with a finite collection of orthogonal idempotents $\{I_1, \ldots, I_n\}$ such that $I_1 + \cdots + I_n = 1$. We will refer to the $I_j$ as distinguished idempotents. 

\begin{remark}
An equivalent perspective is to view $\A$ as a $k$-linear category with objects $\{I_1, \ldots, I_n\}$.
\end{remark}

For such an algebra $\A$, we will let $\I_{\A}$ denote the ring of idempotents of $\A$, i.e. a finite direct product of copies of $k$ (one for each idempotent $I_j$), viewed as a subalgebra of $\A$.

We will also assume that $\A$ is equipped with two $\Z$-gradings which we will call the intrinsic and homological gradings; we let $[1]$ denote an upward shift by $1$ in the homological grading (we use upward rather than downward shifts because, following the conventions of \cite{LOTBimodules,OSzNew}, we use differentials that decrease the homological grading by $1$).

\begin{definition}\label{def:DABimod}
Let $\A$ and $\B$ be graded associative algebras over a field $k$ of characteristic $2$. A $DA$ bimodule over $(\A,\B)$ is given by the data $(X, (\delta^1_i)_{i=1}^{\infty})$ where $X$ is a $\Z \oplus \Z$-graded bimodule over $(\I_{\A}, \I_{\B})$ and, for $i \geq 1$,
\[
\delta^1_i \colon X \otimes \B[1]^{\otimes(i-1)} \to \A[1] \otimes X
\]
(tensor products are over $\I_{\A}$ or $\I_{\B}$ as appropriate) is a bidegree-preserving morphism of bimodules over $(\I_{\A},\I_{\B})$ such that the $DA$ bimodule relations are satisfied, i.e. such that
\begin{align*}
&\sum_{j_1 + j_2 = i+1} (\mu_{\A} \otimes \id_X) \circ (\id_{\A} \otimes \delta^1_{j_1}) \circ (\delta^1_{j_2} \otimes \id_{\B^{\otimes (j_1 - 1)}}) \\
&+ \sum_{j=1}^{i-2} \delta^1_{i-1} \circ (\id_{\B^{\otimes(j-1)}} \otimes \mu_{\B} \otimes \id_{\B^{\otimes(i-j-2)}}) \\
&= 0
\end{align*}
for all $i \geq 1$, where $\mu_{\A}$ and $\mu_{\B}$ are the multiplication operations on $\A$ and $\B$.
\end{definition}

We will often refer to $(X, (\delta^1_i)_{i=1}^{\infty})$ simply as $X$. We say that $X$ is strictly unital if $\delta^1_2(x,1) = 1 \otimes x$ for all $x \in X$ and $\delta^1_i(x,b_1,\ldots,b_{i-1}) = 0$ if $i > 2$ and any $b_j$ is in the idempotent ring $\I_{\B}$.

If we have a $k$-basis for $X$ and $x,x'$ are basis elements with $a \otimes x'$ appearing as a nonzero term of $\delta^1_i(x \otimes b_1 \otimes \cdots \otimes b_{i-1})$ (where $a \in \A$ and $b_1, \ldots, b_{i-1} \in \B$), we will sometimes depict the situation using a ``$DA$ module operation graph'' as in \cite[Definition 2.2.45]{LOTBimodules}. See Figure~\ref{fig:BasicOperationTree} for an example. In this notation, the $DA$ bimodule relations are shown in Figure~\ref{fig:DABimoduleRels}.

\begin{figure}
    \centering
    \includegraphics[scale=0.43]{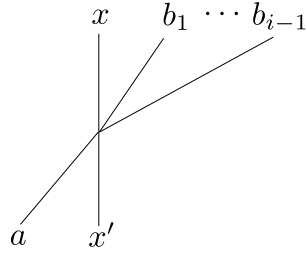}
    \caption{A $DA$ module operation graph showing a term $a \otimes x'$ of the action of $\delta^1_i$ on $x \otimes b_1 \otimes \cdots \otimes b_{i-1}$.}
    \label{fig:BasicOperationTree}
\end{figure}

\begin{figure}
    \centering
    \includegraphics[scale=0.43]{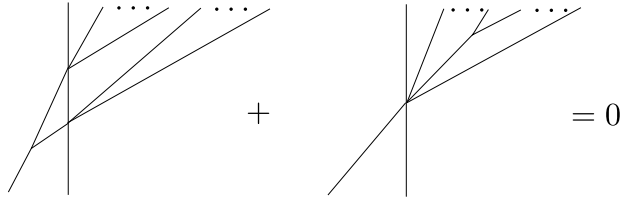}
    \caption{The DA bimodule relations in Definition~\ref{def:DABimod}.}
    \label{fig:DABimoduleRels}
\end{figure}

\begin{remark}
For all $DA$ bimodules $(X, (\delta^1_i)_{i=1}^{\infty})$ considered in this paper, $X$ will be finite-dimensional over $k$, as well as left and right bounded in the sense of \cite[Definition 2.2.46]{LOTBimodules}.
\end{remark}

\begin{remark}
If $X$ is a $DA$ bimodule over $(\A,\B)$, then $\A \otimes_{\I_{\A}} X$ is an $A_{\infty}$ bimodule over $(\A,\B)$ such that the left action of $\A$ has no higher $A_{\infty}$ terms and such that, as a left $\A$-module, $X$ is a direct sum of projective modules $\A \cdot I$ for distinguished idempotents $I$ of $\A$ (disregarding the differential). One can think of the definition of $DA$ bimodule as a convenient way of specifying and reasoning about such $A_{\infty}$ bimodules.
\end{remark}

\subsection{The box tensor product}\label{sec:BoxTensor}

Let $\A,\B,\Cc$ be associative algebras as in Section~\ref{sec:DABimod} and let $X$ and $Y$ be $DA$ bimodules over $(\A,\B)$ and $(\B,\Cc)$ respectively. Assuming $X$ is left bounded or $Y$ is right bounded, Lipshitz--Ozsv{\'a}th--Thurston define a $DA$ bimodule $X \boxtimes Y$ in \cite[Section 2.3.2]{LOTBimodules}.

\begin{definition}
As a bimodule over $(\I_{\A},\I_{\Cc})$, $X \boxtimes Y$ is defined to be $X \otimes_{\I_{\B}} Y$. For $i \geq 1$, the $DA$ bimodule operation $\delta^{\boxtimes,1}_i$ on $X \boxtimes Y$ is defined in terms of the operations $\delta^{X,1}_*$ on $X$ and $\delta^{Y,1}_*$ on $Y$ by
\begin{align*}
\delta^{\boxtimes,1}_i = \sum_{j \geq 0} \sum_{i_1 + \cdots + i_j = i + j - 1} &(\delta^{X,1}_j \otimes \id_Y) \circ (\id_X \otimes \id_{\B^{\otimes(j-1)}} \otimes \delta^{Y,1}_{i_j}) \\
&\circ (\id_X \otimes \id_{\B^{\otimes(j-2)}} \otimes \delta^{Y,1}_{i_{j-1}} \otimes \id_{\A^{\otimes(i_j - 1)}}) \\
& \circ \cdots \circ (\id_X \otimes \delta^{Y,1}_{i_1} \otimes \id_{\A^{\otimes(i_2 + \cdots + i_j - j + 1)}}).
\end{align*}
\end{definition}

In terms of $DA$ module operation graphs, the general pattern for the operation $\delta^{\boxtimes,1}_i$ on $X \boxtimes Y$ is shown in Figure~\ref{fig:BoxTensorTree}.

\begin{figure}
    \centering
    \includegraphics[scale=0.43]{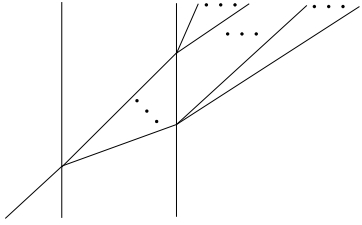}
    \caption{The general pattern for the operation $\delta^{\boxtimes,1}_i$ on $X \boxtimes Y$.}
    \label{fig:BoxTensorTree}
\end{figure}

\begin{remark}
By \cite[Proposition 2.3.10]{LOTBimodules}, if $X$ and $Y$ are both left bounded then so is $X \boxtimes Y$.
\end{remark}

\begin{remark}
Assuming suitable boundedness, the box tensor product $X \boxtimes Y$ is a convenient way of working with the derived tensor product $(\A \otimes_{\I_{\A}} X) \tilde{\otimes}_{\B} (\B \otimes_{\I_{\B}} Y)$; indeed, by \cite[Proposition 2.3.18]{LOTBimodules} we have
\[
\A \otimes_{\I_{\A}} (X \boxtimes Y) \simeq (\A \otimes_{\I_{\A}} X) \tilde{\otimes}_{\B} (\B \otimes_{\I_{\B}} Y)
\]
where $\simeq$ denotes homotopy equivalence of $DA$ bimodules (see \cite[Section 2.2.4]{LOTBimodules}).
\end{remark}

\subsection{Matrix notation}

We will describe $DA$ bimodules using the matrix-based notation of \cite[Section 2.2]{ManionTrivalent}; we recall this notation here. When using this notation to describe a $DA$ bimodule over $(\A,\B)$, it is assumed that $\B$ comes equipped with a choice of $k$-basis such that:
\begin{itemize}
    \item distinguished idempotents of $\B$ are basis elements;
    \item each basis element $b$ satisfies $I \cdot b \cdot I' = b$ for unique distinguished idempotents $I$ of $\A$ and $I'$ of $\B$ (called the left and right idempotents of $b$ respectively) with $\tilde{I} \cdot b \cdot \tilde{I}' = 0$ whenever $\tilde{I}, \tilde{I}'$ are distinguished idempotents of $\A$ and $\B$ with $\tilde{I} \neq I$ or $\tilde{I}' \neq I'$;
    \item each basis element of $\B$ is homogeneous with respect to the bigrading.
\end{itemize}

\begin{definition}
To specify a $DA$ bimodule $(X, (\delta^1_i)_{i=1}^{\infty})$ over $(\A,\B)$ (finite-dimensional over $k$), we specify two matrices, a \emph{primary matrix} and a \emph{secondary matrix}.
\begin{itemize}
    \item The primary matrix is a set-valued matrix (each entry is a finite set with a $\Z \oplus \Z$-bidegree specified for each element) with columns indexed by the distinguished idempotents of $\B$ and rows indexed by the distinguished idempotents of $\A$. Given such a matrix, the bimodule $X$ over $(\I_{\A},\I_{\B})$ is taken to have a $k$-basis given by the union of the sets in each entry (with each basis element given its specified bidegree). More specifically, the left action of $\I_{\A}$ and right action of $\I_{\B}$ are fixed by saying that, for distinguished idempotents $I$ of $\A$ and $I'$ of $\B$, the vector space $I \cdot X \cdot I'$ has a basis given by the set in row $I$ and column $I'$. For an element $x$ of this set, we say that $I$ is the left idempotent of $x$ and $I'$ is the right idempotent of $x$.

    \item The secondary matrix is a matrix whose entries are formal sums of expressions $a$ (for $a \in \A$) and $a \otimes (b_1, \ldots, b_{i-1})$ (for $a \in \A$ and each $b_j$ a basis element for $\B$). The sums are allowed to be infinite, but there should be finitely many terms $a$ (without the $\otimes$ symbol) and finitely many terms for each given sequence $(b_1, \ldots, b_{i-1})$. The rows and columns of the secondary matrix are each given by the union of all entries of the primary matrix, in some fixed order. Given such a matrix, the operations $\delta^1_i$ on $X$ are defined as follows for a basis element $x$ of $X$ (a column label of the secondary matrix):
    \begin{itemize}
        \item $\delta^1_1(x)$ is the sum of all elements $a \otimes y$ where $a$ is a term (without the $\otimes$ symbol) of a secondary matrix entry in column $x$ and $y$ is the row label of the entry containing this term;
        
        \item for $i > 1$ and a sequence $(b_1,\ldots,b_{i-1})$ of basis elements of $\B$, $\delta^1_i(x \otimes b_1 \otimes \cdots \otimes b_{i-1})$ is the sum of all elements $a \otimes y$ where $a \otimes (b_1, \ldots, b_{i-1})$ is a term of a secondary matrix entry in column $x$ and $y$ is the row label of the entry containing this term.
    \end{itemize}
\end{itemize}
\end{definition}

An example of a $DA$ bimodule specified by primary and secondary matrices can be found in Definition~\ref{def:PBimodule} below. We use the following conventions:

\begin{convention}
If indices such as $k$ or $l$ appear in entries of the secondary matrix, we take an infinite sum over all $k \geq 0$ or $l \geq 0$ unless otherwise specified.
\end{convention}

\begin{convention}\label{conv:UnitsOmitted}
When using matrix notation to specify a strictly unital $DA$ bimodule, the above rules would say that in each diagonal entry of the secondary matrix (corresponding to an entry $x$ of the primary matrix), there is a term $I \otimes I'$ where $I$ and $I'$ are the left and right idempotents of $x$ respectively (it should also be the case that no basis element $b_j$ appearing in an entry $a \otimes (b_1, \ldots, b_{i-1})$ is a distinguished idempotent). However, we will omit the terms $I \otimes I'$ when we write the secondary matrix.
\end{convention}

If the primary or secondary matrix has block form, we will often give each block separately.

\begin{remark}
One advantage of this matrix-based notation is that the $DA$ bimodule relations can be checked using linear-algebraic manipulations. Indeed, to check the $DA$ bimodule relations, one forms two new matrices from the secondary matrix. The first matrix, which we will call the ``squared secondary matrix,'' is obtained by multiplying the secondary matrix by itself. When doing so, one will need to take products of secondary matrix entries; these products are defined by
\begin{itemize}
    \item $a \cdot a' = a' a$
    \item $a \cdot (a' \otimes (b'_1, \ldots, b'_{i-1}) = a'a \otimes (b'_1, \ldots, b'_{i-1})$
    \item $(a \otimes (b_1, \ldots, b_{i-1}) \cdot a' = a'a \otimes (b_1,\ldots, b_{i-1})$
    \item $(a \otimes (b_1, \ldots, b_{i-1}) \cdot (a' \otimes (b'_1, \ldots, b'_{j-1}) = a'a \otimes (b'_1, \ldots, b'_{j-1}, b_1, \ldots, b_{i-1})$
\end{itemize}
The second matrix, which we will call the ``multiplication matrix,'' is obtained by, for each $b_j$ in an entry $a \otimes (b_1, \ldots, b_{i-1})$ and each pair of $\B$-basis elements $(b', b'')$ (neither a distinguished idempotent in the strictly unital case) such that $C b_j$ is a term of the basis expansion of $b' b''$ for some nonzero element $C \in k$, adding the term $Ca \otimes (b_1, \ldots, b_{j-1}, b', b'', b_{j+1}, \ldots, b_{i-1})$ to the corresponding entry of the multiplication matrix.

Once these two matrices are formed, the $DA$ bimodule relations amount to saying that the squared secondary matrix and the multiplication matrix sum to zero.
\end{remark}

\subsection{Box tensor products in matrix notation}\label{sec:MatrixBoxTensor}

Suppose we have $DA$ bimodules $X$ over $(\A,\B)$ and $Y$ over $(\B,\Cc)$ as in Section~\ref{sec:BoxTensor}. To specify $X \boxtimes Y$ in matrix notation, one can do the following manipulations:
\begin{itemize}
    \item The primary matrix for $X \boxtimes Y$ is the matrix product of the primary matrix for $X$ (on the left) and the primary matrix for $Y$ (on the right). When multiplying two entries of these primary matrices, one uses the Cartesian product of sets, and when adding these products together, one uses the disjoint union.
    
    \item Let $(x,y)$ and $(x',y')$ be two elements of the primary matrix for $X \boxtimes Y$. To obtain the secondary matrix element in row $(x',y')$ and column $(x,y)$, there are two cases to consider:
    \begin{itemize}
        \item For entries $a$ (with no $\otimes$ symbol) in row $x'$ and column $x$ of the secondary matrix for $X$, if $y = y'$ then add an entry $a$ to the secondary matrix for $X \boxtimes Y$ in row $(x',y')$ and column $(x,y)$. If $y \neq y'$, do not add such an entry.
        
        \item For entries $a \otimes (b_1, \ldots, b_{i-1})$ in row $x'$ and column $x$ of the secondary matrix for $X$, look for all sequences $(y = y_1, y_2, \ldots, y_i = y')$ of primary matrix entries for $Y$ such that, for $1 \leq j \leq i-1$, there is a term $b \otimes (c^j_1, \ldots, c^j_{m_j - 1})$ in row $y_{j+1}$ and column $y_j$ of the secondary matrix for $Y$ such that $C_j b_j$ is a term of the basis expansion of $b$ for some nonzero $C_j \in k$. For all such sequences $(y_1, \ldots, y_i)$ and all such choices of terms $b \otimes (c^j_1, \ldots, c^j_{m_j - 1})$, add an entry
        \[
        C_1 \cdots C_{i-1} a \otimes (c^1_1, \ldots, c^1_{m_1 - 1}, \ldots, c^{i-1}_1, \ldots, c^{i-1}_{m_{i-1} - 1})
        \]
        to the secondary matrix of $X \boxtimes Y$ in row $(x',y')$ and column $(x,y)$.
    \end{itemize}
\end{itemize}

\section{Bordered HFK}\label{sec:BorderedHFK}

\subsection{Algebras}

We now review Ozsv{\'a}th--Szab{\'o}'s algebra $\displaystyle \B(2) = \bigoplus_{k=0}^3 \B(2,k)$ from \cite[Section 3.2]{OSzNew}, which is an algebra over $\F_2$. 

\begin{definition}
The algebra $\B(2,0)$ is $\F_2$. The algebra $\B(2,1)$ is the path algebra of the quiver shown in Figure~\ref{fig:Quiver} modulo the relations $[R_i, U_j] = 0$, $[L_i,U_j] = 0$, $R_i L_i = U_i$, $L_i R_i = U_i$, $R_1 R_2 = 0$, $L_2 L_1 = 0$, $U_2 = 0$ at the leftmost node, and $U_1 = 0$ at the rightmost node. 

The algebra $\B(2,2)$ is the path algebra of the quiver shown in Figure~\ref{fig:Quiver2} modulo the relations $[R_i U_j] = 0$, $[L_i,U_j] = 0$, $R_i L_i = U_i$, and $L_i R_i = U_i$. The algebra $\B(2,3)$ is $\F_2[U_1, U_2]$. We set $\displaystyle \B(2) = \bigoplus_{k=0}^3 \B(2,k)$.
\end{definition}

\begin{figure}
    \centering
    \includegraphics[scale=0.9]{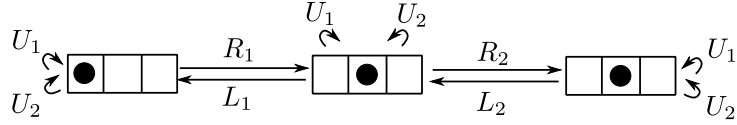}
    \caption{The quiver for $\B(2,1)$.}
    \label{fig:Quiver}
\end{figure}

\begin{figure}
    \centering
    \includegraphics[scale=0.9]{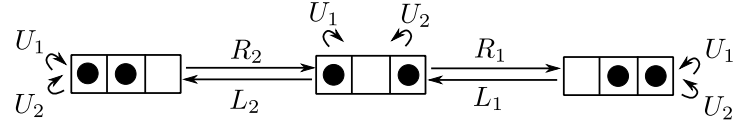}
    \caption{The quiver for $\B(2,2)$.}
    \label{fig:Quiver2}
\end{figure}

Our definition matches Ozsv{\'a}th--Szab{\'o}'s by \cite[Theorem 1.1]{MMW1}; also see \cite[Figure 10]{OSzNew} for $\B(2,1)$, although in this figure Ozsv{\'a}th--Szab{\'o} leave out some of the relations. We define an intrinsic grading on $\B(2)$ by setting $\deg(R_i) = \deg(L_i) = 1$ and $\deg(U_i) = 2$; this grading is twice Ozsv{\'a}th--Szab{\'o}'s single Alexander grading (the doubling is related to the expression $t = q^2$ when obtaining the Alexander polynomial from representations of $U_q(\gl(1|1))$). We define the homological grading to be identically zero on the generators of $\B(2)$.

The algebras $\B(2,1)$ and $\B(2,2)$ each have three distinguished idempotents given by the length-zero paths at each node. Ordering the nodes from left to right and following Ozsv{\'a}th--Szab{\'o}'s notation, for $\B(2,1)$ we can call these idempotents $I_0$, $I_1$, and $I_2$. For $\B(2,2)$ we can call them $I_{01}$, $I_{02}$, and $I_{12}$. The unique nonzero element of $\B(2,0)$ is its distinguished idempotent and we can call it $I_{\varnothing}$; for $\B(2,3)$ the distinguished idempotent is $1 \in \F_2[U_1,U_2]$ and we can call it $I_{012}$.

To avoid subscripts as much as possible, we will relabel these idempotents as follows:
\[
\varnothing := I_{\varnothing},
\]
\[
A := I_0, \quad B := I_1, \quad C := I_2,
\]
\[
AB := I_{01}, \quad AC := I_{02}, \quad BC := I_{12},
\]
\[
ABC := I_{012}.
\]
To clarify the conventions: in Figure~\ref{fig:Quiver} the left and right idempotents of $R_1$ are $A$ and $B$ respectively, while in Figure~\ref{fig:Quiver2} the left and right idempotents of $R_1$ are $AC$ and $BC$ respectively.

The following proposition can be deduced from the definition of $\B(2)$.

\begin{proposition}
A $\F_2$-basis for $\B(2,1)$ is given by 
\[
\{U_1^k (A), \,\, U_1^k (B), \,\, U_2^k(B), \,\, U_2^k(C), \,\, R_1U_1^k, \,\, L_1 U_1^k, \,\, R_2 U_2^k, \,\, L_2 U_2^k\}
\]
($k$ runs over all integers $\geq 0$). A $\F_2$-basis for $\B(2,2)$ is given by 
\[
\{U_1^k U_2^k (AB), \,\, U_1^k U_2^l (AC), \,\, U_1^k U_2^l (BC), \,\, R_1 U_1^k U_2^l, \,\, L_1 U_1^k U_2^l, \,\, R_2 U_1^k U_2^l, \,\, L_2 U_1^k U_2^l)
\]
($k$ and $l$ run over all integers $\geq 0$).
\end{proposition}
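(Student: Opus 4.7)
The plan is to prove each claim in two steps: first verify that the listed elements span $\B(2,k)$, then verify linear independence. The starting point is that the path algebra of the underlying quiver, before imposing relations, has an obvious $\F_2$-basis consisting of composable paths; I would reduce each such path modulo the relations to one of the listed normal forms.

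For the spanning step in $\B(2,1)$, I would argue by induction on path length. The identities $R_i L_i = U_i$ and $L_i R_i = U_i$ allow any consecutive $R_i L_i$ or $L_i R_i$ subpath to be replaced by a single $U_i$ loop, strictly decreasing length. The commutation relations $[R_i, U_j] = 0$ and $[L_i, U_j] = 0$ then let me push all $U_i$ factors past any $R_j$ or $L_j$ to one side. The relations $R_1 R_2 = 0$ and $L_2 L_1 = 0$ kill the two length-two paths between $A$ and $C$; combined with the local relations $U_2|_A = 0$, $U_1|_C = 0$, and the derived identity $U_1 U_2|_B = L_1 R_1 R_2 L_2 = 0$, no surviving reduced monomial mixes $U_1$ and $U_2$ factors. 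Thus every path reduces to either a pure $U_i^k$ loop at an idempotent where that $U_i$ survives, or to $R_i U_i^k$ or $L_i U_i^k$, matching the listed set. The $\B(2,2)$ case is handled analogously, but without the relations $R_1 R_2 = 0$ and $L_2 L_1 = 0$, so mixed $U_1^k U_2^l$ products survive at every idempotent and the basis acquires the product shape claimed.

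For linear independence, since the distinguished idempotents split $\B(2,k)$ as a direct sum $\bigoplus_{I, I'} I \cdot \B(2,k) \cdot I'$ and each listed element lives in a single summand, it suffices to verify linear independence within each summand, which reduces to showing that the $U_i$-polynomial factors produce distinct classes. The cleanest way is to exhibit a faithful representation on which the listed elements act as linearly independent operators; a natural candidate is the left regular action on the $\F_2$-vector space formally spanned by the listed set, with each generator acting by the obvious multiplication rule. One then verifies that every defining relation is respected, which is a finite computation. Alternatively, one can appeal to the strand-algebra quasi-isomorphism recorded in \cite{MMW1}, where an explicit basis matching the claimed one is already visible.

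The main obstacle is the linear independence step: while the spanning argument is a routine Bergman-style rewriting, one must rule out the possibility that the relations generate some additional unexpected identity among the listed elements, for instance killing a $U_i^k$ or identifying $U_1^k(B)$ with $U_2^k(B)$ for $k \geq 1$. The faithful representation closes this gap in a single finite check, and I expect this verification to be the only genuinely case-by-case step in the proof.
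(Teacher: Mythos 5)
Your overall strategy---spanning via Bergman-style rewriting to a normal form, independence via a faithful representation (equivalently, confluence of the rewriting system)---is the natural one, and since the paper offers no proof of this proposition (it only records that it ``can be deduced from the definition''), there is nothing at the level of method to compare against.

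There is, however, a concrete gap in your spanning step for $\B(2,2)$. You correctly observe that $\B(2,2)$ lacks the nilpotency relations $R_1R_2 = 0$, $L_2 L_1 = 0$ of $\B(2,1)$, but the conclusion you draw (``so mixed $U_1^kU_2^l$ products survive'') is a non sequitur: mixed $U$-monomials survive because $\B(2,2)$ also lacks the local vanishings $U_2|_A = 0$, $U_1|_C = 0$. The direct consequence of dropping the nilpotency relations is that the reduced length-two paths through the middle node $AC$ survive. With $R_1\colon BC\to AC$, $L_1\colon AC\to BC$, $R_2\colon AC\to AB$, $L_2\colon AB\to AC$, your rewriting terminates not only at $U_1^kU_2^l(\cdot)$, $R_iU_1^kU_2^l$, $L_iU_1^kU_2^l$ but also at $R_2R_1U_1^kU_2^l$ (between $BC$ and $AB$) and $L_1L_2U_1^kU_2^l$ (between $AB$ and $BC$), and no relation kills these. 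They are manifestly nonzero: for instance, $L_1L_2U_1^lU_2^k\otimes L_1L_2U_1^kU_2^l$ appears as a term in the middle block of the secondary matrix for $\Pc$, and the input slot of a secondary-matrix term must be a nonzero basis element of $\B(2,2)$. A rank count makes the same point: $\B(2,2)$ is free of rank $9$ over $\F_2[U_1,U_2]$ (one reduced path per ordered pair of the three idempotents), while you account for only $7$ families. So your claim that ``the basis acquires the product shape claimed'' is not delivered by your own rewriting argument; carried out carefully, that argument would force you to include $R_2R_1U_1^kU_2^l$ and $L_1L_2U_1^kU_2^l$, and you should have flagged the discrepancy with the set as printed rather than asserting agreement. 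Your $\B(2,1)$ spanning argument and the faithful-representation strategy for independence are otherwise sound.
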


The algebra $\B(2,0) = \F_2$ has a unique $\F_2$-basis, and for $\B(2,3)$ we use the basis of monomials $U_1^k U_2^l$ for $k, l \geq 0$.

\subsection{Bimodules}

Next we review, in matrix notation, Ozsv{\'a}th--Szab{\'o}'s $DA$ bimodules $\Pc$ and $\Nc$ over $\B(2)$. One thinks of these bimodules as being associated to two-strand tangles consisting of a single positive crossing and a single negative crossing respectively and containing the minimal amount of data necessary to build the bimodules for $n$-strand single-crossing tangles. They can be obtained by counting holomorphic disks in the Heegaard diagrams shown in Section~\ref{sec:HeegaardDiagrams} below.

\subsubsection{The bimodule $\Pc$}

This bimodule is defined in \cite[Section 5.1]{OSzNew}; here we translate Ozsv{\'a}th--Szab{\'o}'s definition into matrix notation. 

\begin{definition}\label{def:PBimodule}
The primary matrix for $\mathcal{P}$ has rows and columns indexed by the distinguished idempotents
\[
\varnothing, A, B, C, AB, AC, BC, ABC
\]
of $\B(2)$. The matrix has block-diagonal form with blocks specified by the following matrices:
\[
\kbordermatrix{
 & \varnothing \\
 \varnothing & {_{\varnothing}}S_{\varnothing}
},
\]
\[
\kbordermatrix{
 & A & B & C \\
A & \{{_A}S_A\} & \varnothing & \varnothing \\
B & \{{_B}W_A\} & \{{_B}N_B\} & \{{_B}E_C\} \\
C & \varnothing & \varnothing & \{{_C}S_C\}
},
\]
\[
\kbordermatrix{
 & AB & AC & BC \\
AB & \{{_{AB}}N_{AB}\} & \{{_{AB}}E_{AC}\} & \varnothing \\
AC & \varnothing & \{{_{AC}}S_{AC}\} & \varnothing \\
BC & \varnothing & \{{_{BC}}W_{AC}\} & \{{_{BC}}N_{BC}\}
},
\]
\[
\kbordermatrix{
 & ABC \\
ABC & \{{_{ABC}}N_{ABC}\}
}.
\]
Below we will abuse notation slightly and omit the braces $\{\}$, writing e.g. ${_A}S_A$ instead of $\{{_A}S_A\}$. The secondary matrix for $\Pc$ has a corresponding block-diagonal form; the blocks are 
\[
   \kbordermatrix{
    & {_\varnothing}S_{\varnothing} \\
   {_\varnothing}S_{\varnothing} & 0
    },
\]
\[
\resizebox{\textwidth}{!}{
\kbordermatrix{
   & {_A}S_{A} & {_B}W_{A} & {_B}N_{B} & {_B}E_{C} & {_C}S_{C}\\
{_A}S_{A} & 0 & L_1  & 0 & 0 & 0 \\
 {_B}W_{A} & 0 & U_2^{k+1} \otimes U_1^{k+1} & U_2^{k+1} \otimes L_1 U_1^k & 0 & L_2U_2^k \otimes (L_2, L_1U_1^k)\\
    {_B}N_{B} & R_1 U_1^k \otimes (R_1, U_2^{k+1}) & U_2^{k} \otimes R_1U_1^k & U_2^{k+1} \otimes U_1^{k+1} + U_1^{k+1} \otimes U_2^{k+1} & U_1^{k} \otimes L_2U_2^k & L_2 U_2^k \otimes (L_2, U_1^{k+1}) \\
   {_B}E_{C} & R_1 U_1^k \otimes (R_1, R_2 U_2^k)  & 0 & U_1^{k+1} \otimes R_2 U_2^k & U_1^{k+1}\otimes U_2^{k+1} & 0\\
    {_C}S_{C} & 0 & 0  & 0 & R_2 & 0
    }
},
\]
\[
\resizebox{\textwidth}{!}{
    \kbordermatrix{
   & {_{AB}}N_{AB} & {_{AB}}E_{AC} & {_{AC}}S_{AC} & {_{BC}}W_{AC} & {_{BC}}N_{BC}\\
    {_{AB}}N_{AB}& U_1^lU_2^k\otimes U_1^kU_2^l & U_1^k\otimes L_2U_2^k & *_1 & L_1L_2U_2^k \otimes L_2U_1^{k+1} & L_1 L_2 U_1^l U_2^k \otimes L_1 L_2 U_1^k U_2^l\\
   {_{AB}}E_{AC} & U_1^{l+1}U_2^k\otimes R_2U_1^kU_2^l & U_1^{k+1}\otimes U_2^{k+1} & *_2 & L_1L_2U_2^k \otimes U_1^{k+1} & L_1L_2U_1^{l}U_2^k \otimes L_1U_1^kU_2^l\\
   {_{AC}}S_{AC} & 0 & R_2 & 0 & L_1 & 0 \\
   {_{BC}}W_{AC} & R_2 R_1 U_1^lU_2^{k}\otimes R_2U_1^kU_2^l & R_2R_1U_1^k\otimes U_2^{k+1} & *_3 & U_2^{k+1} \otimes U_1^{k+1} & U_1^lU_2^{k+1}\otimes L_1U_1^kU_2^l\\
   {_{BC}}N_{BC} & R_2R_1U_1^lU_2^k \otimes  R_2R_1U_1^kU_2^l & R_2R_1U_1^k\otimes R_1U_2^{k+1} & *_4 & U_2^k\otimes R_1U_1^k & U_1^lU_2^k \otimes U_1^kU_2^l
   }
},
\]
\[
    \kbordermatrix{
    & {_{ABC}}N_{ABC}\\
  {_{ABC}}N_{ABC} & U_1^l U_2^k \otimes U_1^k U_2^l}.
\]
The entries $*_i$ for $1 \leq i \leq 4$ are specified below; also, in any entry of the form $U_1^l U_2^k \otimes U_1^k U_2^l$, we disallow $(k,l) = (0,0)$ to match Convention~\ref{conv:UnitsOmitted}. The entry $*_1$ in column $_{AC}S_{AC}$ and row $_{AB}N_{AB}$ is:
\begin{align*}
& L_2U_1^tU_2^n \otimes (U_1^{n+1}, L_2U_2^t) \quad (0 \leq n < t)\\
&+ L_2U_1^tU_2^n \otimes (R_1U_1^n, L_1L_2U_2^t) \quad (0 \leq n < t) \\
&+ L_2U_1^tU_2^n \otimes (L_2U_1^{n+1}, U_2^t) \quad (0 \leq n < t) \\
&+ L_2U_1^tU_2^n \otimes (L_2U_2^t, U_1^{n+1}) \quad (1 \leq t \leq n) \\
&+ L_2U_1^tU_2^n \otimes (U_2^t, L_2U_1^{n+1}) \quad (1 \leq t \leq n) \\
&+ L_2U_1^tU_2^n \otimes (R_1U_2^t, L_1L_2U_1^n) \quad (1 \leq t \leq n) \\
&+ L_2 U_2^n \otimes (L_2, U_1^{n+1}) \quad (0 \leq n)
\end{align*}
The entry $*_2$ in column $_{AC}S_{AC}$ and row $_{AB}E_{AC}$ is:
\begin{align*}
&L_2U_1^tU_2^n \otimes (U_1^{n+1}, U_2^t)\quad  (0 \leq n < t) \\
&+ L_2U_1^tU_2^n \otimes (R_1U_1^n, L_1U_2^t) \quad  (0 \leq n < t)\\
&+ L_2U_1^tU_2^n \otimes (L_2 U_1^{n+1}, R_2U_2^{t-1}) \quad  (0 \leq n < t)\\
&+ L_2U_1^tU_2^n \otimes (U_2^t, U_1^{n+1}) \quad  (1 \leq t \leq n) \\
&+ L_2U_1^tU_2^n \otimes (R_1U_2^t, L_1U_1^n) \quad  (1 \leq t \leq n)  \\
&+ L_2U_1^tU_2^n \otimes (L_2 U_2^{t-1}, R_2U_1^{n+1}) \quad  (1 \leq t \leq n) 
\end{align*}
The entry $*_3$ in column $_{AC}S_{AC}$ and row $_{BC}W_{AC}$ is:
\begin{align*}
    &R_1U_1^tU_2^n \otimes (U_2^{t+1}, U_1^n)  \quad  (0 \leq t < n)  \\
    &+ R_1U_1^tU_2^n \otimes (L_2U_2^t, R_2U_1^n) \quad  (0 \leq t < n)\\
    &+ R_1U_1^tU_2^n \otimes (R_1 U_2^{t+1}, L_1 U_1^{n-1}) \quad  (0 \leq t < n) \\
    &+ R_1U_1^tU_2^n \otimes (U_1^n, U_2^{t+1}) \quad  (1 \leq n \leq t) \\
    &+ R_1U_1^tU_2^n \otimes (L_2U_1^n, R_2U_2^t) \quad  (1 \leq n \leq t)\\
    &+ R_1U_1^tU_2^n \otimes (R_1U_1^{n-1}, L_1U_2^{t+1})\quad  (1 \leq n \leq t)
\end{align*}
The entry $*_4$ in column $_{AC}S_{AC}$ and row $_{BC}N_{BC}$ is:
\begin{align*}
    &R_1U_1^tU_2^n \otimes (U_2^{t+1}, R_1U_1^n) \quad (0 \leq t < n) \\
    &+  R_1U_1^tU_2^n \otimes (L_2U_2^t, R_2R_1U_1^n) \quad (0 \leq t < n) \\
    &+  R_1U_1^tU_2^n \otimes (R_1U_2^{t+1}, U_1^n) \quad (0 \leq t < n)\\
    &+  R_1U_1^tU_2^n \otimes (R_1U_1^n, U_2^{t+1}) \quad (1 \leq n \leq t)\\
    &+  R_1U_1^tU_2^n \otimes (U_1^n, R_1U_2^{t+1}) \quad (1 \leq n \leq t)\\
    &+  R_1U_1^tU_2^n \otimes (L_2U_1^n, R_2R_1U_2^t) \quad (1 \leq n \leq t)\\
    &+  R_1U_1^t \otimes (R_1, U_2^{t+1})\quad (0 \leq t)
\end{align*}
\end{definition}

\subsubsection{The bimodule $\Nc$}

The bimodule $\Nc$ is defined in \cite[Section 5.5]{OSzNew} using a symmetry relationship with $\Pc$. Explicitly, $\Nc$ has the same primary matrix as $\Pc$. The blocks of the secondary matrix of $\Nc$ are
\[
   \kbordermatrix{
    & {_\varnothing}S_{\varnothing} \\
   {_\varnothing}S_{\varnothing} & 0
    },
\]
\[
\resizebox{\textwidth}{!}{
\kbordermatrix{
   & {_A}S_{A} & {_B}W_{A} & {_B}N_{B} & {_B}E_{C} & {_C}S_{C}\\
{_A}S_{A} & 0 & 0 & L_1 U_1^k \otimes (U_2^{k+1}, L_1) & L_1 U_1^k \otimes (L_2 U_2^k, L_1) & 0 \\
 {_B}W_{A} & R_1 & U_2^{k+1} \otimes U_1^{k+1} & U_2^k \otimes L_1 U_1^k & 0 & 0 \\
    {_B}N_{B} & 0 & U_2^{k+1} \otimes R_1 U_1^k & U_2^{k+1} \otimes U_1^{k+1} + U_1^{k+1} \otimes U_2^{k+1} & U_1^{k+1} \otimes L_2 U_2^k & 0 \\
   {_B}E_{C} & 0 & 0 & U_1^k \otimes R_2 U_2^k & U_1^{k+1} \otimes U_2^{k+1} & L_2 \\
    {_C}S_{C} & 0 & R_2 U_2^k \otimes (R_1 U_1^k, R_2) & R_2 U_2^k \otimes (U_1^{k+1}, R_2) & 0 & 0
    }
},
\]
\[
\resizebox{\textwidth}{!}{
    \kbordermatrix{
   & {_{AB}}N_{AB} & {_{AB}}E_{AC} & {_{AC}}S_{AC} & {_{BC}}W_{AC} & {_{BC}}N_{BC}\\
    {_{AB}}N_{AB}& U_1^lU_2^k\otimes U_1^kU_2^l & U_1^{l+1} U_2^k \otimes L_2 U_1^k U_2^l & 0 & L_1 L_2 U_1^l U_2^k \otimes L_2 U_1^k U_2^l & L_1 L_2 U_1^l U_2^k \otimes L_1 L_2 U_1^k U_2^l \\
   {_{AB}}E_{AC} & U_1^k \otimes R_2 U_2^k & U_1^{k+1}\otimes U_2^{k+1} & L_2 & L_1 L_2 U_1^k \otimes U_2^{k+1} & L_1 L_2 U_1^k \otimes L_1 U_2^{k+1} \\
   {_{AC}}S_{AC} & *'_1 & *'_2 & 0 & *'_3 & *'_4 \\
   {_{BC}}W_{AC} & R_2 R_1 U_2^k \otimes R_2 U_1^{k+1} & R_2 R_1 U_2^k \otimes U_1^{k+1} & R_1 & U_2^{k+1} \otimes U_1^{k+1} & U_2^k \otimes L_1 U_1^k \\
   {_{BC}}N_{BC} & R_2 R_1 U_1^l U_2^k \otimes R_2 R_1 U_1^k U_2^l & R_2 R_1 U_1^l U_2^k \otimes R_1 U_1^k U_2^l & 0 & U_1^l U_2^{k+1} \otimes R_1 U_1^k U_2^l & U_1^lU_2^k \otimes U_1^kU_2^l
   }
},
\]
\[
    \kbordermatrix{
    & {_{ABC}}N_{ABC}\\
  {_{ABC}}N_{ABC} & U_1^l U_2^k \otimes U_1^k U_2^l}
\]
where in any entry of the specific form $U_1^l U_2^k \otimes U_1^k U_2^l$ we disallow $(k,l) = (0,0)$ to match Convention~\ref{conv:UnitsOmitted}. The entry $*'_1$ in column ${_{AB}}N_{AB}$ and row ${_{AC}}S_{AC}$ is:
\begin{align*}
    & R_2 U_1^t U_2^n \otimes (R_2 U_2^t, U_1^{n+1}) \quad (0 \leq n < t) \\
    &+ R_2 U_1^t U_2^n \otimes (R_2 R_1 U_2^t, L_1 U_1^n) \quad (0 \leq n < t) \\
    &+ R_2 U_1^t U_2^n \otimes (U_2^t, R_2 U_1^{n+1}) \quad (0 \leq n < t) \\
    &+ R_2 U_1^t U_2^n \otimes (U_1^{n+1}, R_2 U_2^t) \quad (1 \leq t \leq n) \\
    &+ R_2 U_1^t U_2^n \otimes (R_2 U_1^{n+1}, U_2^t) \quad (1 \leq t \leq n) \\
    &+ R_2 U_1^t U_2^n \otimes (R_2 R_1 U_1^n, L_1 U_2^t) \quad (1 \leq t \leq n) \\
    &+ R_2 U_2^n \otimes (U_1^{n+1}, R_2) \quad (0 \leq n) \\
\end{align*}
The entry $*'_2$ in column ${_{AB}}E_{AC}$ and row ${_{AC}}S_{AC}$ is:
\begin{align*}
    & R_2 U_1^t U_2^n \otimes (U_2^t, U_1^{n+1}) \quad (0 \leq n < t) \\
    & R_2 U_1^t U_2^n \otimes (R_1 U_2^t, L_1 U_1^n) \quad (0 \leq n < t) \\
    & R_2 U_1^t U_2^n \otimes (L_2 U_2^{t-1}, R_2 U_1^{n+1}) \quad (0 \leq n < t) \\
    & R_2 U_1^t U_2^n \otimes (U_1^{n+1}, U_2^t) \quad (1 \leq t \leq n) \\
    & R_2 U_1^t U_2^n \otimes (R_1 U_1^n, L_1 U_2^t) \quad (1 \leq t \leq n) \\
    & R_2 U_1^t U_2^n \otimes (L_2 U_1^{n+1}, R_2 U_2^{t-1}) \quad (1 \leq t \leq n) \\
\end{align*}
The entry $*'_3$ in column ${_{BC}}W_{AC}$ and row ${_{AC}}S_{AC}$ is:
\begin{align*}
    & L_1 U_1^t U_2^n \otimes (U_1^n, U_2^{t+1}) \quad (0 \leq t < n) \\
    & L_1 U_1^t U_2^n \otimes (L_2 U_1^n R_2 U_2^t) \quad (0 \leq t < n) \\
    & L_1 U_1^t U_2^n \otimes (R_1 U_1^{n-1}, L_1 U_2^{t+1}) \quad (0 \leq t < n) \\
    & L_1 U_1^t U_2^n \otimes (U_2^{t+1}, U_1^n) \quad (1 \leq n \leq t) \\
    & L_1 U_1^t U_2^n \otimes (L_2 U_2^t, R_2 U_1^n) \quad (1 \leq n \leq t) \\
    & L_1 U_1^t U_2^n \otimes (R_1 U_2^{t+1}, L_1 U_1^{n-1}) \quad (1 \leq n \leq t) \\
\end{align*}
The entry $*'_4$ in column ${_{BC}}N_{BC}$ and row ${_{AC}}S_{AC}$ is:
\begin{align*}
    & L_1 U_1^t U_2^n \otimes (L_1 U_1^n, U_2^{t+1}) \quad (0 \leq t < n) \\
    & L_1 U_1^t U_2^n \otimes (L_1 L_2 U_1^n, R_2 U_2^t) \quad (0 \leq t < n) \\
    & L_1 U_1^t U_2^n \otimes (U_1^n, L_1 U_2^{t+1}) \quad (0 \leq t < n) \\
    & L_1 U_1^t U_2^n \otimes (U_2^{t+1}, L_1 U_1^n) \quad (1 \leq n \leq t) \\
    & L_1 U_1^t U_2^n \otimes (L_1 U_2^{t+1}, U_1^n) \quad (1 \leq n \leq t) \\
    & L_1 U_1^t U_2^n \otimes (L_1 L_2 U_2^t, R_2 U_1^n) \quad (1 \leq n \leq t) \\
    & L_1 U_1^t \otimes (U_2^{t+1}, L_1) \quad (0 \leq t) \\
\end{align*}

The starred terms in row ${_{AC}}S_{AC}$ of middle block of the secondary matrix for $\Nc$, as well as in the column ${_{AC}}S_{AC}$ of the middle block of the secondary matrix for $\Pc$, encode the $A_{\infty}$ terms of the right algebra actions on (the middle summands of) the bimodules. See \cite[Section 2.2.4]{LOTBimodules} for more context on these $A_{\infty}$ structures in general.

The symmetry relationship between $\Pc$ and $\Nc$ described in \cite[Section 5.5]{OSzNew} can be summarized by saying the secondary matrix of $\Nc$ is obtained from that of $\Pc$ by performing the following operations:
\begin{itemize}
    \item Take the transpose of the secondary matrix of $\Pc$.
    \item In each entry, replace $L_i$ with $R_i$ and vice-versa, while reversing the order of multiplication when relevant (so e.g. $L_1 L_2$ becomes $R_2 R_1$)
    \item For any entry $a \otimes (b_1, b_2)$, reverse the order of $b_1$ and $b_2$.
\end{itemize}

\subsubsection{Heegaard diagram origins}\label{sec:HeegaardDiagrams}

We comment briefly here on the Heegaard diagram origins of the $DA$ bimodules $\Pc$ and $\Nc$. Roughly, they can be thought of as $DA$ bimodules associated to the bordered sutured Heegaard diagrams shown in Figure~\ref{fig:HeegaardDiagPositive} and Figure~\ref{fig:HeegaardDiagNegative} respectively. A detailed study of the relationship of the algebraically defined bimodules $\Pc$ and $\Nc$ to the holomorphic geometry associated with these diagrams can be found in \cite{OSzHolo}, although in that paper Ozsv{\'a}th--Szab{\'o} do not use the language of bordered sutured Heegaard Floer homology.

\begin{figure}
    \centering
    \includegraphics[scale=0.7]{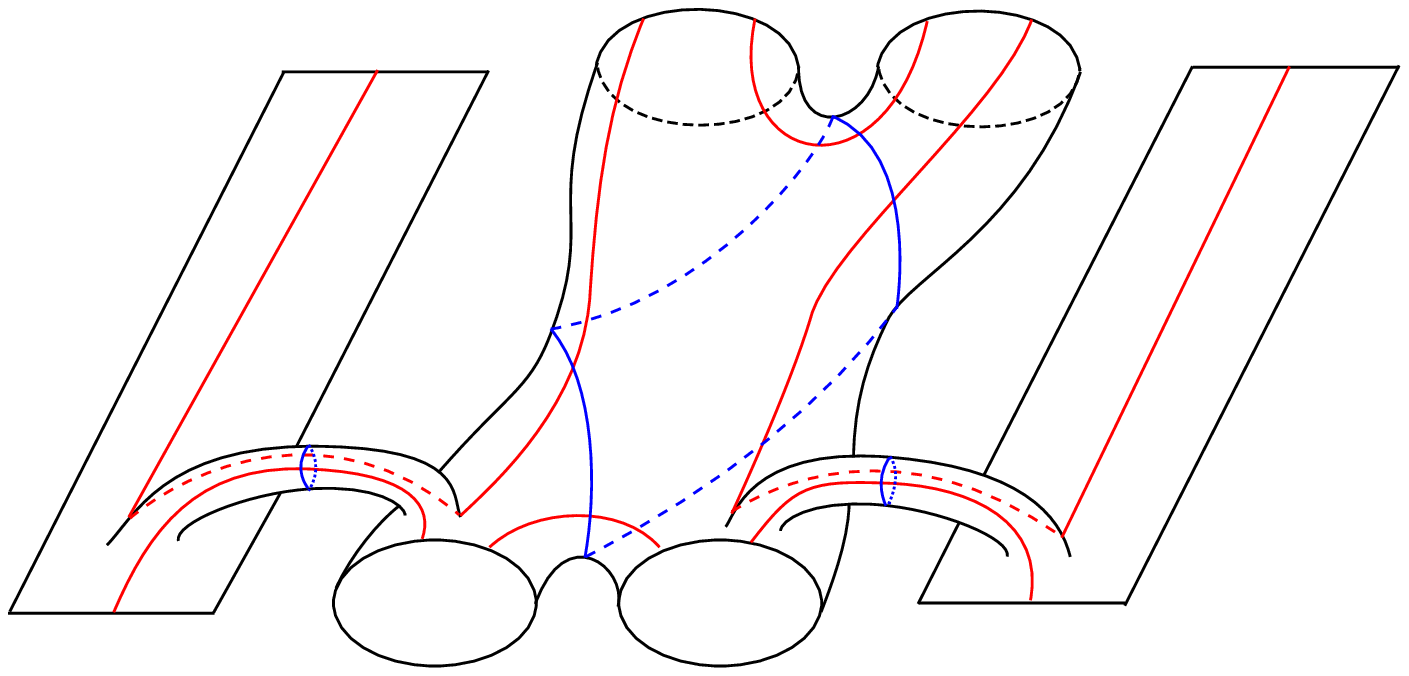}
    \caption{The bordered sutured Heegaard diagram for $\Pc$.}
    \label{fig:HeegaardDiagPositive}
\end{figure}

\begin{figure}
    \centering
    \includegraphics[scale=0.7]{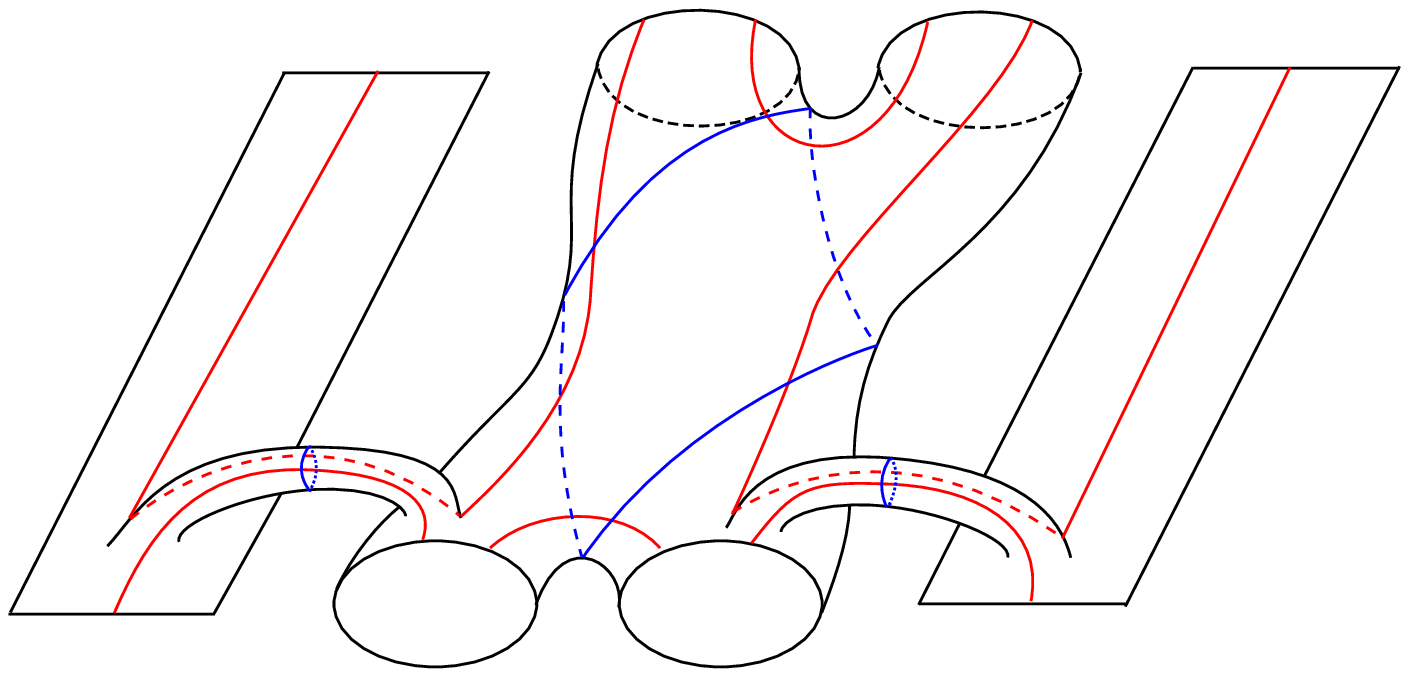}
    \caption{The bordered sutured Heegaard diagram for $\Nc$.}
    \label{fig:HeegaardDiagNegative}
\end{figure}

\begin{remark}
The diagrams in Figure~\ref{fig:HeegaardDiagPositive} and Figure~\ref{fig:HeegaardDiagNegative} do not satisfy all the hypotheses necessary to be covered by Lipshitz--Ozsv{\'a}th--Thurston's results in \cite{LOTBimodules} or Zarev's results in \cite{Zarev}; Ozsv{\'a}th--Szab{\'o} show in \cite{OSzHolo} that they can still be analyzed using a generalization of the analytic setup of bordered or bordered sutured Heegaard Floer homology. However, a more literal generalization of these theories would yield bimodules over the larger dg algebras of \cite{LP,MMW2} rather than over the associative algebra $\B(2)$. The second author, with Marengon and Willis, hope to address this difference in future work, defining $DA$ bimodules over the larger dg algebras and relating them to $\Pc$ and $\Nc$.
\end{remark}

\section{Higher representations}\label{sec:HigherReps}

\subsection{General setup}

We now briefly review how higher representation theory interacts with bordered Heegaard Floer homology, as discussed in more generality in \cite{ManionRouquier}.

\subsubsection{Monoidal category}

The following differential monoidal category $\U$ was defined in \cite{KhovOneHalf}, and 2-actions of $\U$ are a main subject of \cite{ManionRouquier} (see also \cite{DM,DLM}).
\begin{definition}
Let $\U$ denote the strict differential monoidal category with objects generated under $\otimes$ by a single object $e$ and with morphisms generated under $\otimes$ and composition by an endomorphism $\tau$ of $e \otimes e$, subject to the relations $\tau^2 = 0$ and
\[
(\id_e \otimes \tau) \circ (\tau \otimes \id_e) \circ (\id_e \otimes \tau) = (\tau \otimes \id_e) \otimes (\id_e \otimes \tau) \otimes (\tau \otimes \id_e),
\]
and with differential determined by $d(\tau) = \id_{e \otimes e}$.
\end{definition}

\begin{remark}
A grading on $\U$ is defined in \cite{KhovOneHalf}, making it into a dg category. Here we will not need to work with this grading; indeed, in the 2-actions of $\U$ we consider below, $\tau$ will act as zero.
\end{remark}

The endomorphism algebra in $\U$ of $e^{\otimes m}$ is the nilCoxeter dg algebra denoted by $\mathfrak{N}_m$ in \cite{DM}.

\subsubsection{2-representations}

We will be especially concerned with 2-representations of $\U$ on associative algebras in the setting of $DA$ bimodules; we give a concrete definition of this notion below.

\begin{definition}
Let $\A$ be an associative algebra (we make the same assumptions on $\A$ as in Section~\ref{sec:DABimod}). A ($DA$ bimodule) 2-representation of $\U$ on $\A$ is the data of a $DA$ bimodule $\E$ over $\A$ and a (typically non-closed) $DA$ bimodule morphism $\tau$ from $\E \boxtimes \E$ to itself satisfying $\tau^2 = 0$, 
\[
(\id_{\E} \boxtimes \tau) \circ (\tau \boxtimes \id_{\E}) \circ (\id_{\E} \boxtimes \tau) = (\tau \boxtimes \id_{\E}) \circ (\id_{\E} \boxtimes \tau) \circ (\tau \boxtimes \id_{\E}),
\]
and $d(\tau) = 1$. We also assume that $\E$ is left bounded in the sense of \cite[Definition 2.2.46]{LOTBimodules}.
\end{definition}

We will write the above data as $(\A,\E,\tau)$. 

\begin{remark}
The definitions of $DA$ bimodule morphisms, their tensor products, and their differentials can be found in \cite[Section 2.2.4 and Section 2.3.2]{LOTBimodules}, but we will refrain from spelling out these definitions here because in the examples we will consider, $\E \boxtimes \E$ will be the zero $DA$ bimodule and $\tau$ will be the zero morphism.
\end{remark}

\subsubsection{1-morphisms of 2-representations}

We will also work with a $DA$ bimodule version of 1-morphisms between 2-representations of $\U$.

\begin{definition}
Let $(\A,\E,\tau)$ and $(\A',\E',\tau')$ be ($DA$ bimodule) 2-representations of $\U$ on associative algebras $\A$ and $\A'$. A ($DA$ bimodule) 1-morphism of 2-representations from $(\A,\E,\tau)$ to $(\A',\E',\tau')$ consists of a left bounded $DA$ bimodule $X$ over $(\A',\A)$ together with a homotopy equivalence
\[
\alpha \colon X \boxtimes \E \to \E' \boxtimes X,
\]
satisfying
\[
(\tau' \boxtimes \id_X) \circ (\id_{\E'} \boxtimes \alpha) \circ (\alpha \boxtimes \id_{\E}) = (\id_{\E'} \boxtimes \alpha) \circ (\alpha \boxtimes \id_{\E}) \circ (\id_X \boxtimes \tau)
\]
as morphisms from $X \boxtimes \E \boxtimes \E$ to $\E' \boxtimes \E' \boxtimes X$.
\end{definition}

\begin{remark}
We will not elaborate on the definition of homotopy equivalence of $DA$ bimodules here (it can be found in \cite[Section 2.2.4]{LOTBimodules}); in this paper the homotopy equivalences $\alpha$ will be isomorphisms given by bijections between primary matrix entries such that the corresponding secondary matrices agree.
\end{remark}

\subsection{Actions on bordered HFK algebras}

In \cite{ManionRouquier}, 2-representations of $\U$ are defined on the algebras $\A(\Zc)$ appearing in bordered sutured Heegaard Floer homology. Here $\Zc$ denotes an arc diagram, i.e. a finite collection of oriented intervals and circles equipped with a 2-to-1 matching of finitely many points in the interiors of the intervals and circles, and there is a 2-representation of $\U$ on $\A(\Zc)$ for each interval in $\Zc$.

The algebra $\B(2)$ was shown in \cite{MMW2,LP} to be quasi-isomorphic to $\A(\Zc)$ where $\Zc$ is the arc diagram shown in Figure~\ref{fig:ArcDiag}. Since $\Zc$ has two intervals, we should expect two 2-actions of $\U$ on $\B(2)$; we define these 2-actions below. See \cite{LaudaManion} for a related 2-representation of $\U$ on an $n$-strand Ozsv{\'a}th--Szab{\'o} algebra from \cite{OSzNew}. In more detail, we will define $DA$ bimodules $\E_1$ and $\E_2$ over $\B(2)$; these bimodules will satisfy $\E_i \boxtimes \E_i = 0$, so that $(\A,\E_i,0)$ is a 2-representation of $\U$.

\begin{remark}
The arc diagram shown in Figure~\ref{fig:ArcDiag} can also be seen on the front and back edges of the Heegaard diagrams in Figure~\ref{fig:HeegaardDiagPositive} and Figure~\ref{fig:HeegaardDiagNegative}, with the red arcs in Figure~\ref{fig:ArcDiag} determined by the matching pattern of the red arcs in the Heegaard diagrams.
\end{remark}

\begin{figure}
    \centering
    \includegraphics[scale=0.5]{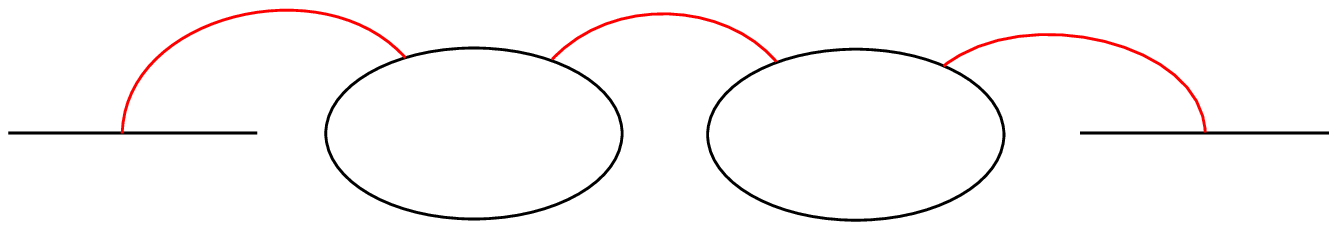}
    \caption{The arc diagram $\Zc$ such that $\A(\Zc)$ is quasi-isomorphic to $\B(2)$; the 2-to-1 matching is indicated by the red arcs, and by symmetry one may take any orientation on the circles and intervals.}
    \label{fig:ArcDiag}
\end{figure}

\begin{definition}
The primary matrix for $\E_1$ has block form with the following blocks (we write e.g. $X_1$ for the singleton set $\{X_1\}$):
\[
    \kbordermatrix{
    & A & B & C \\
    \varnothing & X_1 & \varnothing & \varnothing
    },
    \kbordermatrix{
    & AB & AC & BC \\
    A & \varnothing & \varnothing & \varnothing \\
    B & X_2 & \varnothing & \varnothing \\
    C & \varnothing & X_3 & \varnothing
    },
    \kbordermatrix{
    & ABC \\
    AB & \varnothing \\
    AC & \varnothing \\
    BC & X_4
    }.
\]
The secondary matrix for $\E_1$ has a corresponding block form with blocks
\[
   \kbordermatrix{
   & X_1\\
   X_1 & 0
   },
\]
\[
   \kbordermatrix{ & X_2 & X_3\\
    X_2&  U_1^{k+1} \otimes U_1^{k+1} + U_2^{k+1} \otimes U_2^{k+1} & L_2U_2^k \otimes L_2 U_2^k\\
X_3 &  R_2U_2^k \otimes R_2U_2^k & U_2^{k+1}\otimes U_2^{k+1}},
\]
\[
\kbordermatrix{& X_4\\
X_4 & U_1^kU_2^l \otimes U_1^kU_2^l};
\]
in the final block we disallow $(k,l) = (0,0)$ to match Convention~\ref{conv:UnitsOmitted}.
\end{definition}

\begin{definition}
The primary matrix for $\E_2$ has block form with the following blocks (again we write e.g. $Y_1$ for the singleton set $\{Y_1\}$):
\[
    \kbordermatrix{
    & A & B & C \\
    \varnothing & \varnothing & \varnothing & Y_1
    },
    \kbordermatrix{
    & AB & AC & BC \\
    A & \varnothing & Y_2 & \varnothing \\
    B & \varnothing & \varnothing & Y_3\\
    C & \varnothing & \varnothing & \varnothing
    },
    \kbordermatrix{
    & ABC \\
    AB & Y_4 \\
    AC & \varnothing \\
    BC & \varnothing
    }.
\]
The secondary matrix for $\E_2$ has a corresponding block form with blocks
\[
    \kbordermatrix{& Y_1\\
    Y_1 & 0},
\]
\[
    \kbordermatrix{& Y_2 & Y_3\\
    Y_2 & U_1^{k+1} \otimes U_1^{k+1} & L_1U_1^k \otimes L_1U_1^k\\
    Y_3 & R_1U_1^k \otimes R_1U_1^k & U_1^{k+1}\otimes U_1^{k+1} + U_2^{k+1}\otimes U_2^{k+1}},
\]
\[
    \kbordermatrix{& Y_4\\
   Y_4 & U_1^kU_2^l \otimes U_1U_2^l };
\]
in the final block we disallow $(k,l) = (0,0)$ to match Convention~\ref{conv:UnitsOmitted}.
\end{definition}

By multiplying the primary matrix for $\E_i$ by itself ($i = 1,2$), one can see that $\E_i \boxtimes \E_i$ has a primary matrix with each entry the empty set; in other words, $\E_i \boxtimes \E_i$ is zero as claimed above.

\section{1-morphism structure for \texorpdfstring{$\Pc$}{P}}\label{sec:P1Morphism}

\subsection{Commutativity with \texorpdfstring{$\E_1$}{E1}}

\subsubsection{The bimodule $\E_1 \boxtimes \Pc$}

We give a matrix description for $\E_1 \boxtimes \Pc$ following Section~\ref{sec:MatrixBoxTensor}. To get the primary matrix for $\E_1 \boxtimes \Pc$, we multiply the primary matrices for $\E_1$ and $\Pc$. We can do this block-by-block, so the primary matrix for $\E_1 \boxtimes \Pc$ has block form with blocks given by
\[
    \kbordermatrix{
     & A & B & C \\
    \varnothing & X_1 & \varnothing & \varnothing
    }
    \cdot \kbordermatrix{
     & A & B & C \\
    A & S_A & \varnothing & \varnothing \\
    B & W & N & E \\
    C & \varnothing & \varnothing & S_C
    }
    = \kbordermatrix{
     & A & B & C \\
    \varnothing & X_1 S_C & \varnothing & \varnothing
    },
\]
\[
    \kbordermatrix{
     & AB & AC & BC \\
    A & \varnothing & \varnothing & \varnothing \\
    B & X_2 & \varnothing & \varnothing \\
    C & \varnothing & X_3 & \varnothing
    }
    \cdot \kbordermatrix{
     & AB & AC & BC \\
    AB & N_{AB} & E & \varnothing \\
    AC & \varnothing & S & \varnothing \\
    BC & \varnothing & W & N_{BC}
    }
    = \kbordermatrix{
     & AB & AC & BC \\
    A & \varnothing & \varnothing & \varnothing \\
    B & X_2 N_{AB} & X_2 E & \varnothing \\
    C & \varnothing & X_3 X & \varnothing
    },
\]
\[
    \kbordermatrix{
     & ABC \\
    AB & \varnothing \\
    AC & \varnothing \\
    BC & X_4
    }
    \cdot \kbordermatrix{
     & ABC \\
     ABC & N
    }
    = \kbordermatrix{
     & ABC \\
     AB & \varnothing \\
     AC & \varnothing \\
     BC & X_4 N
    }.
\]
In these matrices, we indicate idempotents only when necessary to distinguish primary matrix entries in the same block (so, for example, in the block with rows and columns $A,B,C$, we distinguish between two types of $S$ generators, but the only $N$ generator in this block is ${_{B}}N_B$ so we omit the idempotents and just write $N$).

The secondary matrix for $\E_1 \boxtimes \Pc$ also has block form with blocks given by
\[
    \kbordermatrix{
    & X_1 S_C \\
    X_1 S_C & 0
    },
\]
\[
    \kbordermatrix{
    & X_2 N_{AB} & X_2 E & X_3 S  \\
   X_2 N_{AB} & U_2^{k+1} \otimes U_1^{k+1} + U_1^{k+1} \otimes U_2^{k+1} &  U_1^k \otimes L_2U_2^k & L_2 U_2^k\otimes (L_2, U_1^{k+1})\\
    X_2 E & U_1^{k+1} \otimes R_2U_2^k & U_1^{k+1} \otimes U_2^{k+1}& 0\\
    X_3 S & 0 & R_2 & 0
    },
\]
\[
    \kbordermatrix{
     & X_4N \\
     X_4N & U_1^l U_2^k \otimes U_1^k U_2^l
    };
\]
in the final block we disallow $(k,l) = (0,0)$. An explanation for the terms in the secondary matrix is given in Figure~\ref{fig:E1PTrees}, which uses the operation graph depictions of Figure~\ref{fig:BoxTensorTree}.

\begin{figure}
    \centering
    \includegraphics[scale=0.8]{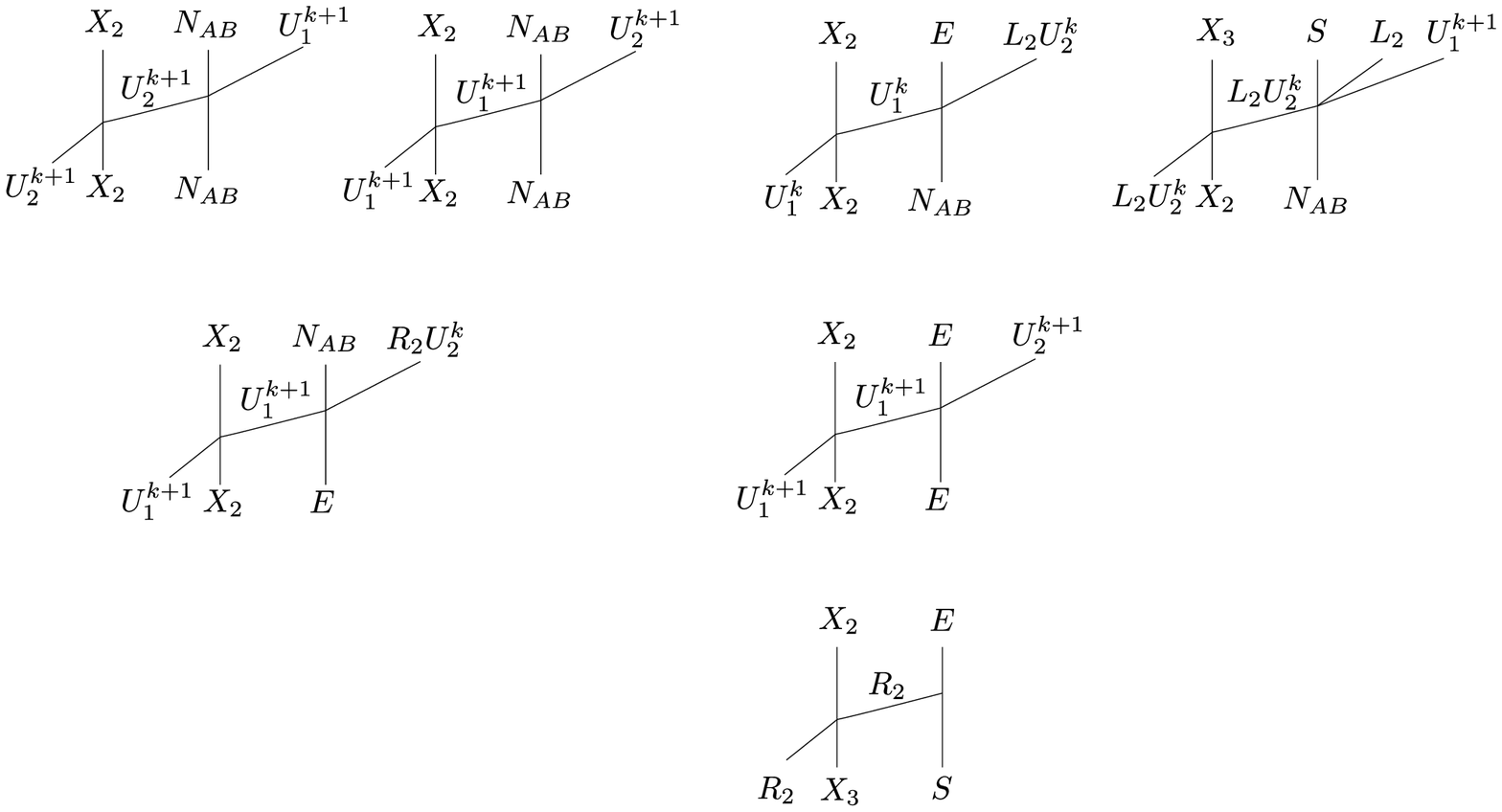}
    \caption{Operation graphs for the terms in the secondary matrix of $\E_1 \boxtimes \Pc$.}
    \label{fig:E1PTrees}
\end{figure}

\subsubsection{The bimodule $\Pc \boxtimes \E_1$}

Similarly, we give a matrix description for $\Pc \boxtimes \E_1$. The primary matrix has block form with blocks
\[
\kbordermatrix{
 & \varnothing \\
\varnothing & S
}
\cdot \kbordermatrix{
 & A & B & C \\
\varnothing & X_1 & \varnothing & \varnothing
}
= \kbordermatrix{
 & A & B & C \\
\varnothing & S X_1 & \varnothing & \varnothing
},
\]
\[
\kbordermatrix{
 & A & B & C \\
A & S_A & \varnothing & \varnothing \\
B & W & N & E \\
C & \varnothing & \varnothing & S_C
}
\cdot \kbordermatrix{
 & AB & AC & BC \\
A & \varnothing & \varnothing & \varnothing \\
B & X_2 & \varnothing & \varnothing \\
C & \varnothing & X_3 & \varnothing
}
= \kbordermatrix{
 & AB & AC & BC \\
A & \varnothing & \varnothing & \varnothing \\
B & N X_2 & E X_3 & \varnothing \\
C & \varnothing & S_C X_3 & \varnothing
},
\]
\[
\kbordermatrix{
 & AB & AC & BC \\
AB & N_{AB} & E & \varnothing \\
AC & \varnothing & S & \varnothing \\
BC & \varnothing & W & N_{BC}
}
\cdot \kbordermatrix{
 & ABC \\
AB & \varnothing \\
AC & \varnothing \\
BC & X_4
}
= \kbordermatrix{
 & ABC \\
AB & \varnothing \\
AC & \varnothing \\
BC & N_{BC} X_4
}.
\]

The secondary matrix for $\Pc \boxtimes \E_1$ also has block form with blocks
\[
    \kbordermatrix{
    & SX_1 \\
    SX_1 & 0
    },
\]
\[
    \kbordermatrix{
    & N X_2 & E X_3 & S_C X_3  \\
    N X_2 & U_2^{k+1} \otimes U_1^{k+1} + U_1^{k+1} \otimes U_2^{k+1} &  U_1^k \otimes L_2U_2^k & L_2U_2^k \otimes (L_2, U_1^{k+1})\\
    E X_3 & U_1^{k+1} \otimes R_2U_2^k & U_1^{k+1} \otimes U_2^{k+1}& 0\\
    S_C X_3 & 0 & R_2 & 0
    },
\]
\[
    \kbordermatrix{
     & N_{BC} X_4 \\
     N_{BC} X_4 & U_1^l U_2^k \otimes U_1^k U_2^l
    };
\]
in the final block we disallow $(k,l) = (0,0)$. An explanation for the terms in the secondary matrix is given in Figure~\ref{fig:PE1Trees}.

\begin{figure}
    \centering
    \includegraphics[scale=0.8]{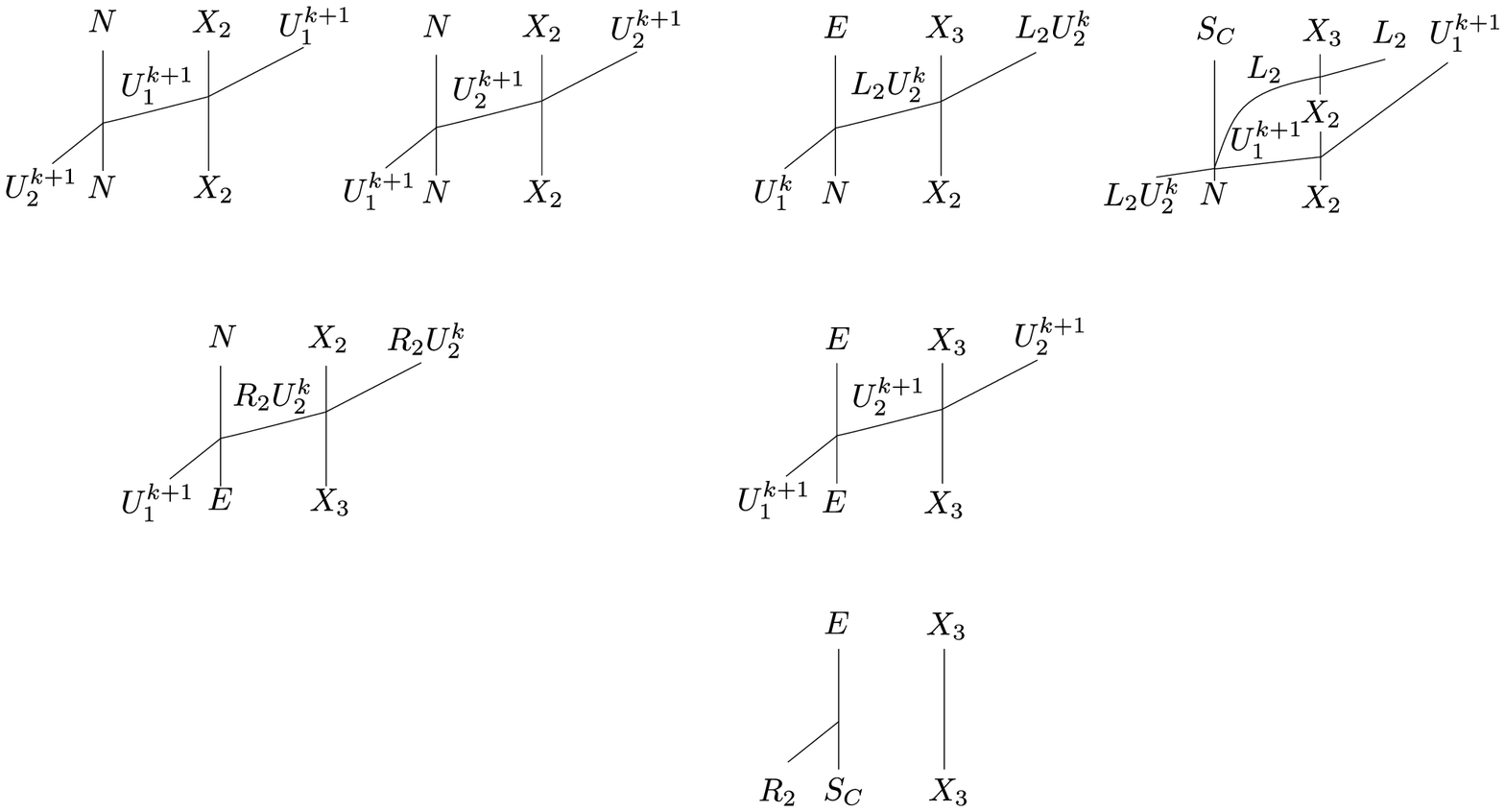}
    \caption{Operation graphs for the terms in the secondary matrix of $\Pc \boxtimes \E_1$.}
    \label{fig:PE1Trees}
\end{figure}

\begin{corollary}
The $DA$ bimodules $\E_1 \boxtimes \Pc$ and $\Pc \boxtimes \E_1$ are isomorphic to each other.
\end{corollary}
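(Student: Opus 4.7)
The plan is to exhibit an explicit isomorphism by matching the two bimodules block-by-block using the matrix presentations that were just computed. Both primary matrices decompose into three blocks indexed by the total weight ($0$, $1$, $2$, $3$), and within each block, the number of nonempty entries and their positions (as a bimodule over $(\I_{\B(2)}, \I_{\B(2)})$) agree. So the first step is to write down the bijection on singleton generators:
\[
X_1 S_C \longleftrightarrow S X_1, \qquad X_4 N \longleftrightarrow N_{BC} X_4,
\]
\[
X_2 N_{AB} \longleftrightarrow N X_2, \qquad X_2 E \longleftrightarrow E X_3, \qquad X_3 S \longleftrightarrow S_C X_3.
\]
One checks directly from the primary matrix computations in Sections~5.1.1 and 5.1.2 above that each paired generator sits in the same row and column, so this bijection respects both the left and right idempotent actions and therefore defines an isomorphism of $(\I_{\B(2)}, \I_{\B(2)})$-bimodules $\alpha \colon \E_1 \boxtimes \Pc \to \Pc \boxtimes \E_1$.

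Next I would verify that $\alpha$ intertwines the $DA$ operations, i.e. that the secondary matrices for $\E_1 \boxtimes \Pc$ and $\Pc \boxtimes \E_1$ coincide entry-by-entry under the above relabeling. This is immediate: the two secondary matrices have been displayed explicitly in the preceding subsections, and inspection shows that for each pair of generators, the corresponding entry (in all three blocks) is literally the same formal expression in $\B(2) \otimes (\B(2)^{\otimes (i-1)})$. In particular the middle blocks agree on all six off-diagonal entries and both diagonal entries, the outer blocks agree (each with a single nonzero generator), and the top block is zero on both sides.

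Since the matrix data completely determines a $DA$ bimodule, equality of the secondary matrices under the bijection of primary matrix entries gives $\alpha$ the structure of a strict (chain-level, $A_\infty$-strict) isomorphism of $DA$ bimodules from $\E_1 \boxtimes \Pc$ to $\Pc \boxtimes \E_1$. I do not expect any genuine obstacle here; the work of verifying the isomorphism was already done when computing the two box tensor products in Sections~5.1.1 and 5.1.2, and the corollary simply records the observation that those two computations produce identical matrix data up to the evident relabeling of generators.
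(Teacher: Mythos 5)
Your proposal is correct and follows the paper's own (one-line) argument exactly: the corollary is proved simply by observing that the primary and secondary matrices computed for $\E_1 \boxtimes \Pc$ and $\Pc \boxtimes \E_1$ coincide under the evident relabeling of generators. Your explicit listing of the bijection and the entry-by-entry check of the secondary matrices just spells out the same observation in more detail.
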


\begin{proof}
The primary and secondary matrices for $\E_1 \boxtimes \Pc$ and $\Pc \boxtimes \E_1$ agree up to a relabeling of primary matrix entries.
\end{proof}

\subsection{Commutativity with \texorpdfstring{$\E_2$}{E2}}

\subsubsection{The bimodule $\E_2 \boxtimes \Pc$}

Next we give a matrix description of $\E_2 \boxtimes \Pc$. The primary matrix has block form with blocks
\[
\kbordermatrix{
 & A & B & C \\
\varnothing & \varnothing & \varnothing & Y_1
}
\cdot \kbordermatrix{
 & A & B & C \\
A & S_A & \varnothing & \varnothing \\
B & W & N & E \\
C & \varnothing & \varnothing & S_C
}
= \kbordermatrix{
 & A & B & C \\
\varnothing & \varnothing & \varnothing & Y_1 S_C
},
\]
\[
\kbordermatrix{
 & AB & AC & BC \\
A & \varnothing & Y_2 & \varnothing \\
B & \varnothing & \varnothing & Y_3 \\
C & \varnothing & \varnothing & \varnothing
}
\cdot \kbordermatrix{
 & AB & AC & BC \\
AB & N_{AB} & E & \varnothing \\
AC & \varnothing & S & \varnothing \\
BC & \varnothing & W & N_{BC}
}
= \kbordermatrix{
 & AB & AC & BC \\
A & \varnothing & Y_2 S & \varnothing \\
B & \varnothing & Y_3 W & Y_3 N_{BC} \\
C & \varnothing & \varnothing & \varnothing
},
\]
\[
\kbordermatrix{
 & ABC \\
AB & Y_4 \\
AC & \varnothing \\
BC & \varnothing
}
\cdot \kbordermatrix{
 & ABC \\
ABC & N
}
= \kbordermatrix{
 & ABC \\
AB & Y_4 N \\
AC & \varnothing \\
BC & \varnothing
}.
\]

The secondary matrix for $\E_2 \boxtimes \Pc$ also has block form with blocks
\[
    \kbordermatrix{ & Y_1 S_C\\
    Y_1 S_C & 0},
\]
\[
    \kbordermatrix{ & Y_2 S & Y_3 W &  Y_3 N_{BC}\\
    Y_2 S & 0 & L_1 & 0\\
    Y_3 W & 0 & U_2^{k+1} \otimes U_1^{k+1} & U_2^{k+1} \otimes L_1U_1^k \\
    Y_3 N_{BC} & R_1U_1^k\otimes (R_1, U_2^{k+1}) & U_2^{k} \otimes R_1 U_1^{k} & U_1^{k+1} \otimes U_2^{k+1} + U_2^{k+1} \otimes U_1^{k+1}},
\]
\[
    \kbordermatrix{ & Y_4 N\\
    Y_4 N & U_1^kU_2^l \otimes U_1^l U_2^k};
\]
in the final block we disallow $(k,l) = (0,0)$. One can draw operation graphs for the secondary matrix entries as we did above in Figures~\ref{fig:E1PTrees} and \ref{fig:PE1Trees}, but we will omit the graphs here.

\subsubsection{The bimodule $\Pc \boxtimes \E_2$}

The primary matrix for $\Pc \boxtimes \E_2$ has block form with blocks
\[
\kbordermatrix{
 & \varnothing \\
\varnothing & S
}
\cdot \kbordermatrix{
 & A & B & C \\
\varnothing & \varnothing & \varnothing & Y_1
}
= \kbordermatrix{
 & A & B & C \\
\varnothing & \varnothing & \varnothing & S Y_1
},
\]
\[
\kbordermatrix{
 & A & B & C \\
A & S_A & \varnothing & \varnothing \\
B & W & N & E \\
C & \varnothing & \varnothing & S_C
}
\cdot \kbordermatrix{
 & AB & AC & BC \\
A & \varnothing & Y_2 & \varnothing \\
B & \varnothing & \varnothing & Y_3 \\
C & \varnothing & \varnothing & \varnothing
}
= \kbordermatrix{
 & AB & AC & BC \\
A & \varnothing & S_A Y_2 & \varnothing \\
B & \varnothing & W Y_2 & N Y_3 \\
C & \varnothing & \varnothing & \varnothing
},
\]
\[
\kbordermatrix{
 & AB & AC & BC \\
AB & N_{AB} & E & \varnothing \\
AC & \varnothing & S & \varnothing \\
BC & \varnothing & W & N_{BC}
}
\cdot \kbordermatrix{
 & ABC \\
AB & Y_4 \\
AC & \varnothing \\
BC & \varnothing
}
= \kbordermatrix{
 & ABC \\
AB & N_{AB} Y_4 \\
AC & \varnothing \\
BC & \varnothing
}.
\]

The secondary matrix for $\Pc \boxtimes \E_2$ also has block form with blocks
\[
    \kbordermatrix{ & SY_1\\
    SY_1 & 0},
\]
\[
      \kbordermatrix{ & S_A Y_2 & W Y_2 & N Y_3 \\
    S_A Y_2 & 0 & L_1 & 0\\
    W Y_2 & 0 & U_2^{k+1} \otimes U_1^{k+1} & U_2^{k+1} \otimes L_1U_1^k \\
    N Y_3 & R_1U_1^k\otimes (R_1, U_2^{k+1}) & U_2^{k} \otimes R_1 U_1^{k} & U_1^{k+1} \otimes U_2^{k+1} + U_2^{k+1} \otimes U_1^{k+1}},
\]
\[
    \kbordermatrix{ & N_{AB} Y_4 \\
    N_{AB} Y_4 & U_1^k U_2^l \otimes U_1^l U_2^k};
\]
in the final block we disallow $(k,l) = (0,0)$. As with $\E_2 \boxtimes \Pc$, we will omit drawing the operation graphs.

\begin{corollary}
The $DA$ bimodules $\E_2 \boxtimes \Pc$ and $\Pc \boxtimes \E_2$ are isomorphic to each other.
\end{corollary}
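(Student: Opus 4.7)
The plan is to follow exactly the same strategy used in the preceding corollary for $\E_1 \boxtimes \Pc \cong \Pc \boxtimes \E_1$: exhibit a bijection between the primary matrix entries of $\E_2 \boxtimes \Pc$ and $\Pc \boxtimes \E_2$ that preserves left and right $\B(2)$-idempotents and bigradings, and verify that under this relabeling the two secondary matrices coincide term-by-term.

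Reading off the block-form primary matrices computed above, the natural candidate bijection is
\[
Y_1 S_C \leftrightarrow S Y_1, \quad Y_2 S \leftrightarrow S_A Y_2, \quad Y_3 W \leftrightarrow W Y_2, \quad Y_3 N_{BC} \leftrightarrow N Y_3, \quad Y_4 N \leftrightarrow N_{AB} Y_4.
\]
First I would check that each pair of generators sits in the same idempotent slot; this is immediate from their positions in the two block-form primary matrices (for instance, both $Y_2 S$ and $S_A Y_2$ live in row $A$, column $AC$). Bigradings agree automatically, since a tensor product generator inherits the sum of the bigradings of its factors and both sides are built from the same pair $(\E_2,\Pc)$.

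Next I would compare the secondary matrices block by block after renaming the row and column labels according to the bijection. Inspection shows that the corresponding entries are already presented in identical form: the outermost blocks labeled by $Y_1 S_C$ and $S Y_1$ are both zero; the middle three-by-three blocks use exactly the same patterns of $U_i^k$, $L_i$, $R_i$ and share the single $A_\infty$ term $R_1 U_1^k \otimes (R_1,U_2^{k+1})$ in the bottom-left position; and the top-dimensional blocks for $Y_4 N$ and $N_{AB} Y_4$ both contribute $U_1^k U_2^l \otimes U_1^l U_2^k$ with $(k,l)=(0,0)$ excluded on both sides.

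The main step is thus a bookkeeping verification rather than a genuine obstacle: one simply confirms that each nonzero secondary matrix entry of $\E_2 \boxtimes \Pc$ lands in the row and column of $\Pc \boxtimes \E_2$ prescribed by the bijection, with the same element of $\B(2)$ and (where applicable) the same input sequence from $\B(2)$. Once this is done, the bijection together with the identity on secondary matrices defines the required strictly unital isomorphism of $DA$ bimodules.
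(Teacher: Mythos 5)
Your proposal is correct and follows essentially the same approach as the paper's one-line proof: exhibit the explicit bijection on primary matrix entries and note that the secondary matrices then coincide literally, giving a strict $DA$ bimodule isomorphism.

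One small caveat on a side remark: your claim that the bigradings agree ``automatically, since \ldots both sides are built from the same pair $(\E_2,\Pc)$'' is slightly too glib as stated, since the matched generators are built from \emph{different} pairs of factors (e.g.\ $Y_2 S$ pairs $Y_2$ with ${}_{AC}S_{AC}$ while $S_A Y_2$ pairs ${}_{A}S_{A}$ with $Y_2$). The correct quick justification is that the secondary matrices coincide and are bidegree-preserving, so once a single generator's bidegree is fixed the rest are forced to agree. This does not affect the correctness of the overall argument.
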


\begin{proof}
The primary and secondary matrices for $\E_2 \boxtimes \Pc$ and $\Pc \boxtimes \E_2$ agree up to a relabeling of primary matrix entries.
\end{proof}

\section{1-morphism structure for \texorpdfstring{$\Nc$}{N}}\label{sec:N1Morphism}

Here we summarize, with fewer details, the computations for $\Nc$ that are analogous to those for $\Pc$ in Section~\ref{sec:P1Morphism}.

\subsection{Commutativity with \texorpdfstring{$\E_1$}{E1}}

\subsubsection{The bimodule $\E_1 \boxtimes \Nc$}

The primary matrix for $\E_1 \boxtimes \Nc$ has block form with the same blocks as for $\E_1 \boxtimes \Pc$, namely
\[
\kbordermatrix{
& A & B & C \\
\varnothing & \varnothing & \varnothing & X_1 S_C
}, \quad
\kbordermatrix{
& AB & AC & BC \\
A & \varnothing & \varnothing & \varnothing \\
B & X_2 N_{AB} & X_2 E & \varnothing \\
C & \varnothing & X_3 X & \varnothing
}, \quad
\kbordermatrix{
& ABC \\
AB & \varnothing \\
AC & \varnothing \\
BC & X_4 N
}.
\]
The secondary matrix for $\E_1 \boxtimes \Nc$ has block form with blocks given by
\[
    \kbordermatrix{
    & X_1 S_C \\
    X_1 S_C & 0
    },
\]
\[
    \kbordermatrix{
    & X_2 N_{AB} & X_2 E & X_3 S  \\
   X_2 N_{AB} & U_2^{k+1} \otimes U_1^{k+1} + U_1^{k+1} \otimes U_2^{k+1} &  U_1^{k+1} \otimes L_2U_2^k & 0 \\
    X_2 E & U_1^k \otimes R_2U_2^k & U_1^{k+1} \otimes U_2^{k+1}& L_2 \\
    X_3 S & R_2 U_2^k \otimes (U_1^{k+1}, R_2) & 0 & 0
    },
\]
\[
    \kbordermatrix{
     & X_4N \\
     X_4N & U_1^l U_2^k \otimes U_1^k U_2^l
    };
\]
in the final block we disallow $(k,l) = (0,0)$.

\subsubsection{The bimodule $\Nc \boxtimes \E_1$}

The primary matrix for $\Nc \boxtimes \E_1$ has block form with the same blocks as for $\Pc \boxtimes \E_1$, namely
\[
\kbordermatrix{
 & A & B & C \\
\varnothing & S X_1 & \varnothing & \varnothing
}, \quad
\kbordermatrix{
 & AB & AC & BC \\
A & \varnothing & \varnothing & \varnothing \\
B & N X_2 & E X_3 & \varnothing \\
C & \varnothing & S_C X_3 & \varnothing
}, \quad
\kbordermatrix{
 & ABC \\
AB & \varnothing \\
AC & \varnothing \\
BC & N_{BC} X_4
}.
\]
The secondary matrix for $\Nc \boxtimes \E_1$ has block form with blocks given by
\[
    \kbordermatrix{
    & SX_1 \\
    SX_1 & 0
    },
\]
\[
    \kbordermatrix{
    & N X_2 & E X_3 & S_C X_3  \\
    N X_2 & U_2^{k+1} \otimes U_1^{k+1} + U_1^{k+1} \otimes U_2^{k+1} &  U_1^{k+1} \otimes L_2U_2^k & 0 \\
    E X_3 & U_1^k \otimes R_2U_2^k & U_1^{k+1} \otimes U_2^{k+1} & L_2 \\
    S_C X_3 & R_2 U_2^k \otimes (U_1^{k+1}, R_2) & 0 & 0
    },
\]
\[
    \kbordermatrix{
     & N_{BC} X_4 \\
     N_{BC} X_4 & U_1^l U_2^k \otimes U_1^k U_2^l
    };
\]
in the final block we disallow $(k,l) = (0,0)$.

\begin{corollary}
The $DA$ bimodules $\E_1 \boxtimes \Nc$ and $\Nc \boxtimes \E_1$ are isomorphic to each other.
\end{corollary}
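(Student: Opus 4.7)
The plan is to proceed exactly as in the two analogous corollaries already proved for $\Pc$, namely to read off the statement directly from the explicit matrix computations carried out in the immediately preceding two subsections. Since an isomorphism of $DA$ bimodules here simply amounts to a bijection between primary matrix entries under which the secondary matrices agree, and since both bimodules $\E_1 \boxtimes \Nc$ and $\Nc \boxtimes \E_1$ have already been written out in full matrix form, what remains is a direct comparison.

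First I would note that the primary matrices for $\E_1 \boxtimes \Nc$ and $\Nc \boxtimes \E_1$ have the same block decomposition across idempotents and, within each block, the same number of generators in each $(I,I')$-position. Thus there is an obvious bijection of primary matrix entries
\[
X_1 S_C \leftrightarrow S X_1, \quad X_2 N_{AB} \leftrightarrow N X_2, \quad X_2 E \leftrightarrow E X_3, \quad X_3 S \leftrightarrow S_C X_3, \quad X_4 N \leftrightarrow N_{BC} X_4,
\]
respecting idempotents and bidegrees.

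Next I would compare the three nontrivial blocks of the two secondary matrices entry by entry under this bijection. The outer blocks (for the $\varnothing$ and $ABC$ idempotent rows/columns) are trivially equal: one is zero and the other is $U_1^l U_2^k \otimes U_1^k U_2^l$ in both cases, with the same exclusion $(k,l)\neq(0,0)$. For the middle $3\times 3$ block one checks that under the relabeling above, the entries in each position (diagonal $U_2^{k+1}\otimes U_1^{k+1} + U_1^{k+1}\otimes U_2^{k+1}$ and $U_1^{k+1}\otimes U_2^{k+1}$, the off-diagonal terms involving $U_1^{k+1}\otimes L_2 U_2^k$, $U_1^k \otimes R_2 U_2^k$, $L_2$, and the $A_\infty$-term $R_2 U_2^k \otimes (U_1^{k+1}, R_2)$) are literally the same on both sides. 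This is visible by inspection: the two secondary matrices displayed in Sections on $\E_1\boxtimes\Nc$ and $\Nc\boxtimes\E_1$ are typeset identically, modulo the row/column labels.

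The main (and only) obstacle is simply keeping track of the bijection of generators correctly and confirming that no stray entry has been transposed or shifted; since this has already been arranged in the matrix displays, the verification is immediate, and the proof reduces to the same one-sentence argument as in the two preceding corollaries: \emph{The primary and secondary matrices for $\E_1 \boxtimes \Nc$ and $\Nc \boxtimes \E_1$ agree up to a relabeling of primary matrix entries.}
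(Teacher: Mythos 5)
Your proposal is correct and matches the paper's approach exactly: the paper proves the analogous corollaries for $\Pc$ by the one-line observation that the primary and secondary matrices agree up to relabeling, and leaves the $\Nc$ corollaries as immediate consequences of the displayed matrices in the same way. Your explicit bijection and entry-by-entry check just spell out what the paper treats as visible by inspection.
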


\subsection{Commutativity with \texorpdfstring{$\E_2$}{E2}}

\subsubsection{The bimodule $\E_2 \boxtimes \Nc$}

The primary matrix for $\E_2 \boxtimes \Nc$ has block form with the same blocks as for $\E_2 \boxtimes \Pc$, namely
\[
\kbordermatrix{
 & A & B & C \\
\varnothing & \varnothing & \varnothing & Y_1 S_C
}, \quad
\kbordermatrix{
 & AB & AC & BC \\
A & \varnothing & Y_2 S & \varnothing \\
B & \varnothing & Y_3 W & Y_3 N_{BC} \\
C & \varnothing & \varnothing & \varnothing
}, \quad
\kbordermatrix{
 & ABC \\
AB & Y_4 N \\
AC & \varnothing \\
BC & \varnothing
}.
\]
The secondary matrix for $\E_2 \boxtimes \Nc$ has block form with blocks given by
\[
    \kbordermatrix{ & Y_1 S_C\\
    Y_1 S_C & 0},
\]
\[
    \kbordermatrix{ & Y_2 S & Y_3 W &  Y_3 N_{BC}\\
    Y_2 S & 0 & 0 & L_1 U_1^k \otimes (U_2^{k+1}, L_1) \\
    Y_3 W & R_1 & U_2^{k+1} \otimes U_1^{k+1} & U_2^k \otimes L_1U_1^k \\
    Y_3 N_{BC} &0 & U_2^{k+1} \otimes R_1 U_1^{k} & U_1^{k+1} \otimes U_2^{k+1} + U_2^{k+1} \otimes U_1^{k+1}},
\]
\[
    \kbordermatrix{ & Y_4 N\\
    Y_4 N & U_1^kU_2^l \otimes U_1^l U_2^k};
\]
in the final block we disallow $(k,l) = (0,0)$.

\subsubsection{The bimodule $\Nc \boxtimes \E_2$}

The primary matrix for $\Nc \boxtimes \E_2$ has block form with the same blocks as for $\Pc \boxtimes \E_2$, namely
\[
\kbordermatrix{
 & A & B & C \\
\varnothing & \varnothing & \varnothing & S Y_1
}, \quad
\kbordermatrix{
 & AB & AC & BC \\
A & \varnothing & S_A Y_2 & \varnothing \\
B & \varnothing & W Y_2 & N Y_3 \\
C & \varnothing & \varnothing & \varnothing
}, \quad
\kbordermatrix{
 & ABC \\
AB & N_{AB} Y_4 \\
AC & \varnothing \\
BC & \varnothing
}.
\]
The secondary matrix for $\Nc \boxtimes \E_2$ has block form with blocks given by
\[
    \kbordermatrix{ & SY_1\\
    SY_1 & 0},
\]
\[
      \kbordermatrix{ & S_A Y_2 & W Y_2 & N Y_3 \\
    S_A Y_2 & 0 & 0 & L_1 U_1^k \otimes (U_2^{k+1}, L_1) \\
    W Y_2 & R_1 & U_2^{k+1} \otimes U_1^{k+1} & U_2^k \otimes L_1U_1^k \\
    N Y_3 & 0 & U_2^{k+1} \otimes R_1 U_1^{k} & U_1^{k+1} \otimes U_2^{k+1} + U_2^{k+1} \otimes U_1^{k+1}},
\]
\[
    \kbordermatrix{ & N_{AB} Y_4 \\
    N_{AB} Y_4 & U_1^k U_2^l \otimes U_1^l U_2^k};
\]
in the final block we disallow $(k,l) = (0,0)$.

\begin{corollary}
The $DA$ bimodules $\E_2 \boxtimes \Nc$ and $\Nc \boxtimes \E_2$ are isomorphic to each other.
\end{corollary}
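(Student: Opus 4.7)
The plan is to follow the same template as the three preceding corollaries: exhibit a bijection between the primary matrix entries of $\E_2 \boxtimes \Nc$ and $\Nc \boxtimes \E_2$ that preserves the left and right distinguished idempotents of $\B(2)$, then verify that under this relabeling the secondary matrices coincide entry-by-entry.

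Looking at the three blocks of primary matrix data written out just above the corollary statement, the bijection is forced by the idempotent structure: $Y_1 S_C \leftrightarrow S Y_1$, $Y_2 S \leftrightarrow S_A Y_2$, $Y_3 W \leftrightarrow W Y_2$, $Y_3 N_{BC} \leftrightarrow N Y_3$, and $Y_4 N \leftrightarrow N_{AB} Y_4$. First I would verify from the block matrices that each pair occupies the same row and column of its block, hence shares left and right idempotents; this is immediate from inspection. So as bimodules over $(\I_{\B(2)}, \I_{\B(2)})$, the two sides are identified.

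Next I would compare the secondary matrix blocks under this labeling. The two outer blocks are essentially trivial: they are either zero or one-dimensional with the single entry $U_1^k U_2^l \otimes U_1^l U_2^k$ (subject to $(k,l)\neq(0,0)$), agreeing on both sides. The only nontrivial check is the middle $3 \times 3$ block, where with rows and columns ordered as $(Y_2 S, Y_3 W, Y_3 N_{BC})$ on one side and $(S_A Y_2, W Y_2, N Y_3)$ on the other, each of the nine entries matches verbatim (for example, the $(1,3)$ entry is $L_1 U_1^k \otimes (U_2^{k+1}, L_1)$ on both sides, and the $(3,2)$ entry is $U_2^{k+1} \otimes R_1 U_1^k$ on both sides). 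Hence the relabeling upgrades to an isomorphism of $DA$ bimodules.

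The main obstacle, such as it is, is purely bookkeeping: on the $\E_2 \boxtimes \Nc$ side the label $Y_3$ appears twice while on the $\Nc \boxtimes \E_2$ side the label $Y_2$ appears twice, so one must pair generators by idempotent rather than by name. Once the idempotent-compatible bijection is fixed, the tables have already been laid out in compatible orderings, so no further computation is required and the isomorphism is read off by visual comparison.
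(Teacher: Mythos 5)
Your argument is exactly the one the paper intends: the preceding tables already display the primary and secondary matrices of $\E_2 \boxtimes \Nc$ and $\Nc \boxtimes \E_2$ in matching block form, and the proof (left implicit for the $\Nc$ corollaries but stated explicitly for the $\Pc$ ones) is precisely that the matrices agree up to the idempotent-forced relabeling of primary matrix entries. Your explicit bijection and the observation about pairing generators by idempotent rather than by superficial label are correct and add useful clarity, but do not differ from the paper's approach.
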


\bibliographystyle{alpha}
\bibliography{biblio.bib}

\begin{thebibliography}{MMW21}

\bibitem[DLM19]{DLM}
C.~L. Douglas, R.~Lipshitz, and C.~Manolescu.
\newblock Cornered {H}eegaard {F}loer homology.
\newblock {\em Mem. Amer. Math. Soc.}, 262(1266):v+124, 2019.
\newblock \href{https://arxiv.org/abs/1309.0155}{arXiv:1309.0155}.

\bibitem[DM14]{DM}
C.~L. Douglas and C.~Manolescu.
\newblock On the algebra of cornered {F}loer homology.
\newblock {\em J. Topol.}, 7(1):1--68, 2014.
\newblock \href{https://arxiv.org/abs/1105.0113}{arXiv:1105.0113}.

\bibitem[Kho14]{KhovOneHalf}
M.~Khovanov.
\newblock How to categorify one-half of quantum {${\mathfrak{gl}}(1|2)$}.
\newblock In {\em Knots in {P}oland {III}. {P}art {III}}, volume 103 of {\em
  Banach Center Publ.}, pages 211--232. Polish Acad. Sci. Inst. Math., Warsaw,
  2014.
\newblock \href{https://arxiv.org/abs/1007.3517}{arXiv:1007.3517}.

\bibitem[LM21]{LaudaManion}
A.~D. Lauda and A.~Manion.
\newblock Ozsv\'{a}th-{S}zab\'{o} bordered algebras and subquotients of
  category {$\mathcal{O}$}.
\newblock {\em Adv. Math.}, 376:107455, 59 pages, 2021.
\newblock \href{https://arxiv.org/abs/1910.03770}{arXiv:1910.03770}.

\bibitem[LOT15]{LOTBimodules}
R.~Lipshitz, P.~S. Ozsv\'ath, and D.~P. Thurston.
\newblock Bimodules in bordered {H}eegaard {F}loer homology.
\newblock {\em Geom. Topol.}, 19(2):525--724, 2015.
\newblock \href{https://arxiv.org/abs/1003.0598}{arXiv:1003.0598}.

\bibitem[LP20]{LP}
Y.~Lekili and A.~Polishchuk.
\newblock Homological mirror symmetry for higher-dimensional pairs of pants.
\newblock {\em Compos. Math.}, 156(7):1310--1347, 2020.
\newblock \href{https://arxiv.org/abs/1811.04264}{arXiv:1811.04264}.

\bibitem[Man19]{ManionDecat}
A.~Manion.
\newblock On the decategorification of {O}zsv\'{a}th and {S}zab\'{o}'s bordered
  theory for knot {F}loer homology.
\newblock {\em Quantum Topol.}, 10(1):77--206, 2019.
\newblock \href{https://arxiv.org/abs/1611.08001}{arXiv:1611.08001}.

\bibitem[Man20]{ManionTrivalent}
A.~Manion.
\newblock Trivalent vertices and bordered knot {F}loer homology in the standard
  basis, 2020.
\newblock \href{https://arxiv.org/abs/2012.07184}{arXiv:2012.07184}.

\bibitem[MMW20]{MMW2}
A.~Manion, M.~Marengon, and M.~Willis.
\newblock Strands algebras and {O}zsv\'{a}th and {S}zab\'{o}'s
  {K}auffman-states functor.
\newblock {\em Algebr. Geom. Topol.}, 20(7):3607--3706, 2020.
\newblock \href{https://arxiv.org/abs/1903.05655}{arXiv:1903.05655}.

\bibitem[MMW21]{MMW1}
A.~Manion, M.~Marengon, and M.~Willis.
\newblock Generators, relations, and homology for {O}zsv\'{a}th-{S}zab\'{o}'s
  {K}auffman-states algebras.
\newblock {\em Nagoya Math. J.}, 244:60--118, 2021.
\newblock \href{https://arxiv.org/abs/1903.05654}{arXiv:1903.05654}.

\bibitem[MR20]{ManionRouquier}
A.~Manion and R.~Rouquier.
\newblock Higher representations and cornered {H}eegaard {F}loer homology,
  2020.
\newblock \href{https://arxiv.org/abs/2009.09627}{arXiv:2009.09627}.

\bibitem[O{\relax Sz}]{HFKCalc}
P.~S. Ozsv{\'a}th and Z.~{\relax Sz}ab{\'o}.
\newblock Knot {F}loer homology calculator.
\newblock
  \href{https://www.math.princeton.edu/~szabo/HFKcalc.html}{math.princeton.edu/{$\sim$}szabo/HFKcalc.html}.

\bibitem[O{\relax Sz}18]{OSzNew}
P.~S. Ozsv\'ath and Z.~{\relax Sz}ab\'o.
\newblock Kauffman states, bordered algebras, and a bigraded knot invariant.
\newblock {\em Adv. Math.}, 328:1088--1198, 2018.
\newblock \href{https://arxiv.org/abs/1603.06559}{arXiv:1603.06559}.

\bibitem[O{\relax Sz}19a]{OSzHolo}
P.~S. Ozsv{\'a}th and Z.~{\relax Sz}ab{\'o}.
\newblock Algebras with matchings and knot {F}loer homology, 2019.
\newblock \href{https://arxiv.org/abs/1912.01657}{arXiv:1912.01657}.

\bibitem[O{\relax Sz}19b]{OSzNewer}
P.~S. Ozsv\'{a}th and Z.~{\relax Sz}ab\'{o}.
\newblock Bordered knot algebras with matchings.
\newblock {\em Quantum Topol.}, 10(3):481--592, 2019.
\newblock \href{https://arxiv.org/abs/1707.00597}{arXiv:1707.00597}.

\bibitem[O{\relax Sz}20]{OSzHolo2}
P.~S. Ozsv{\'a}th and Z.~{\relax Sz}ab{\'o}.
\newblock Algebras with matchings and link {F}loer homology, 2020.
\newblock \href{https://arxiv.org/abs/2004.07309}{arXiv:2004.07309}.

\bibitem[Zar11]{Zarev}
R.~Zarev.
\newblock {\em Bordered {S}utured {F}loer {H}omology}.
\newblock 2011.
\newblock Thesis (Ph.D.)--Columbia University.

\end{thebibliography}

\end{document}